\newtheorem{theorem}{Theorem}[section]
\newtheorem{lemma}[theorem]{Lemma}
\newtheorem{prop}[theorem]{Proposition}
\newtheorem{cor}[theorem]{Corollary}
\theoremstyle{definition}
\theoremstyle{remark}
\newtheorem{remark}[theorem]{\bf{Remark}}
\numberwithin{equation}{section}
\begin{document}
	
\title[ A new norm on the space of reproducing kernel Hilbert space operators ]  
	 { A new norm on the space of reproducing kernel Hilbert space operators and Berezin number inequalities }

\author[R. K. Nayak and P. Bhunia ]{Raj Kumar Nayak and  Pintu Bhunia}
	
	\address{(Nayak) Department of Mathematics, GKCIET, Malda, India}
	\email{rajkumar@gkciet.ac.in / rajkumarju51@gmail.com}

    \address{(Bhunia) Department of Mathematics, Indian Institute of Science, Bengaluru 560012, Karnataka, India}
 \email {pintubhunia5206@gmail.com / pintubhunia@iisc.ac.in}

	\thanks{The second author was supported by National Post Doctoral Fellowship PDF/2022/000325 from SERB (Govt.\ of India) and SwarnaJayanti Fellowship SB/SJF/2019-20/14  (PI: Apoorva Khare) from SERB (Govt.\ of India). He also would like to thank National Board for Higher Mathematics (Govt.\ of India) for the financial support in the form of NBHM Post-Doctoral Fellowship 0204/16(3)/2024/R\&D-II/6747 under the mentorship of Prof. Apoorva Khare.}
	
	\subjclass[2020] {47A30; 15A60, 47A12}
	\keywords{Berezin symbol, Berezin number, reproducing kernel Hilbert space}

	\date{}
	
	\begin{abstract}
	In this note, we introduce a novel norm, termed the $t-$Berezin norm, on the algebra of all bounded linear operators defined on a reproducing kernel Hilbert space $\mathcal{H}$ as
    $$\|A\|_{t-ber} = \sup_{ \lambda, \mu \in \Omega} \left\{ t|\langle A \hat{k}_\lambda, \hat{k}_\mu \rangle| + (1-t) |\langle A^* \hat{k}_\lambda, \hat{k}_\mu \rangle| \right\}, \quad t\in [0,1],$$
    where $A \in \mathcal{B}(\mathcal{H})$ is a bounded linear operator. This norm characterizes those invertible operators which are also unitary.
	 Using this newly defined norm, we establish various upper bounds for the Berezin number, thereby refining the existing results. Additionally, we derive several sharp bounds for the Berezin number of an operator via the Orlicz function.
	\end{abstract}
	\maketitle

	\section{Introduction}
    
	\noindent The main purpose of this work is to introduce a new norm on a functional Hilbert space operator, which is always bounded by the classical Berezin norm. This new norm is highly effective in the analysis of various operators. Moreover, we establish that this newly defined norm yields enhanced bounds for the Berezin number compared to the existing well-known bounds. Note that here our study is inspired by the recent article by Bhunia \cite{Pintu GMJ}. Let us first introduce the necessary notation and terminologies.

Let $\Omega$ be a nonempty set. 
 A functional Hilbert space $\mathcal{H} = \mathcal{H}(\Omega)$ is a Hilbert space consisting of complex-valued functions on $\Omega,$ characterized by the property that the point evaluations are continuous. Specifically, for all $\lambda \in \Omega$ the map $f \rightarrow f(\lambda) $ is a continuous linear functional on $\mathcal{H}.$ The Riesz representation theorem states that for all $\lambda \in \Omega$ there is a unique element $k_\lambda \in \mathcal{H}$ such that $f(\lambda) = \langle f, k_{\lambda} \rangle,$ for all $f \in \mathcal{H}.$ The family $\{k_{\lambda}: \lambda \in \Omega  \}$ is referred to as the reproducing kernel  of $\mathcal{H}.$ If $\{e_n\}$ is an orthonormal basis for a functional Hilbert space $\mathcal{H},$ then the reproducing kernel of $\mathcal{H}$ is given by $k_{\lambda}(z)= \sum_n \bar{e_n}(\lambda) e_n(z);$ (see \cite[Problem 37]{Halmos}). For $\lambda \in \Omega,$  $\hat{k}_\lambda = \frac{k_\lambda}{\|k_\lambda\|}$ is the normalized reproducing kernel on $\mathcal{H}.$ 
	For a bounded linear operator $A \in \mathcal{B}(\mathcal{H}),$ the function $\tilde{A}$, defined on $\Omega$ by $\tilde{A}(\lambda) = \langle A \hat{k}_\lambda, \hat{k}_\lambda  \rangle$, is the Berezin symbol of $A.$ The Berezin set and the Berezin number of the operator $A$ are defined by 
		$\textbf{Ber}(A) = \{\langle A \hat{k}_\lambda, \hat{k}_\lambda \rangle: \lambda \in \Omega \} $
		and $\textbf{ber}(A) = \sup \{| \langle A \hat{k}_\lambda, \hat{k}_\lambda \rangle|: \lambda \in \Omega \}$,
	respectively. The Berezin norm of an operator $A \in \mathcal{B}(\mathcal{H})$ is defined by  $\|A\|_{ber} = \sup \left\{\left|\left\langle A \hat{k}_\lambda, \hat{k}_\mu \right\rangle\right|: \lambda, \mu \in \Omega\right\} .$  It is straightforward to verify that the Berezin norm $\|\cdot\|_{ber}$ defines a norm on $\mathcal{B}(\mathcal{H}).$ Additionally, it is evident that $\textbf{ber}(A) \leq \|A\|_{ber} \leq \|A\|. $ Moreover, the Berezin norm satisfies $\|A^*\|_{ber} = \|A\|_{ber}.$ The Berezin number of an operator $A \in \mathcal{B}(\mathcal{H})$ satisfies the following properties: 
	\begin{eqnarray*}
		~~&&(i)~~\textbf{ber}(\alpha A) = |\alpha| \textbf{ber}(A)~~\forall~~\alpha \in \mathbb{C};\\
		~~&&(ii)~~\textbf{ber}(A+B) \leq \textbf{ber}(A) + \textbf{ber}(B).
	\end{eqnarray*}
Note that $\textbf{ber}(\cdot) : \mathcal{B}(\mathcal{H})\to\mathbb{R}$ does not define a norm, in general. If $\mathcal{H}$ has the ``Ber" property (i.e. for any two operators $A,B \in \mathcal{B}\left(\mathcal{H}\right)$,   $\widetilde{A}\left(
\lambda\right)=\widetilde{B}\left(\lambda\right)$ for all $\lambda \in \Omega$ implies $A=B$), then $\textbf{ber}(\cdot)$ defines a norm on $\mathcal{B}(\mathcal{H}).$
Recently, Bhunia et al. proved in \cite[Proposition 2.11]{CAOT Pintu} that if $A\in \mathcal{B}(\mathcal{H})$ is positive, then 
    \begin{equation}\label{Pintu CAOT}
    \textbf{ber}(A) = \|A\|_{ber}.
   \end{equation}
   This equality does not hold for any self-adjoint operator, see e.g. \cite[Example 2.12]{CAOT Pintu}. 
The Berezin symbol (and Berezin number) is useful in studying reproducing kernel Hilbert space operators and it has wide application in operator theory. It has been studied in detail for Toeplitz and Hankel operators on Hardy and Bergman spaces. For an account of the Berezin symbol and its applications, we refer to \cite{K2013, KAR_JFA_2006,KS_CVTA_2005,T_OM_2021}. 
Recall that (see e.g. \cite{Paulsen_book}) the Hardy-Hilbert space of the unit disk  $\mathbb{D}= \{\lambda \in \mathbb{C} : |\lambda |<1 \}$ is denoted by ${H}^2(\mathbb{D}),$ and is defined as the Hilbert space of all analytic functions on $\mathbb{D}$ having power series representations with square summable complex coefficients. It is well known that ${H}^2(\mathbb{D})$ is a reproducing kernel Hilbert space, and for $\lambda \in \mathbb{D},$ the corresponding reproducing kernel of ${H}^2(\mathbb{D})$ is given by $k_{\lambda}(z)=\sum_{n=0}^{\infty}\bar{\lambda}^n z^n$.

\noindent We now introduce the $t-$Berezin norm on $\mathcal{B}(\mathcal{H}).$ For $A \in \mathcal{B}(\mathcal{H})$ and $t\in [0,1]$, the $t-$Berezin norm of $A$, denoted by $\|A\|_{t-ber}$, is defined as
$$\|A\|_{t-ber} = \sup_{ \lambda, \mu \in \Omega} \left\{ t|\langle A \hat{k}_\lambda, \hat{k}_\mu \rangle| + (1-t) |\langle A^* \hat{k}_\lambda, \hat{k}_\mu \rangle| \right\}.$$
Clearly, $ \textbf{ber}(A) \leq \|A\|_{t-ber} \leq \|A\|_{ber}$ for every $t\in [0,1].$ To illustrate, we consider an example. 
Suppose $M_z\in \mathcal{B}({H}^2(\mathbb{D}))$ with $M_z(f)=zf$ for all $f\in {H}^2(\mathbb{D}).$ Then we have
\begin{eqnarray*}
    1=\textbf{ber}(M_z) \leq  \|M_z\|_{t-ber} &=&\min_{t\in [0,1]} \sup_{\lambda,\mu \in \mathbb{D}} \sqrt{(1-|\lambda|^2) (1-|\mu|^2)} \frac{t|\mu|+(1-t)|\lambda|}{|1-\overline{\lambda} \mu|}\\
    &\leq& \sup_{\lambda,\mu \in \mathbb{D}} \sqrt{(1-|\lambda|^2) (1-|\mu|^2)} \frac{|\mu|}{|1-\overline{\lambda} \mu|}=  \|M_z\|_{ber}.
\end{eqnarray*}

In this work, we study various properties of the $t-$Berezin norm, and show that this norm characterizes those invertible operator which are unitary.
 We establish various upper bounds for the $t$-Berezin norm of operators as well as operator matrices, and conclude that this norm gives better upper bound of the Berezin number than the existing bounds. 
 Further, we present significant generalizations and improvements of the Berezin number inequalities for bounded linear operators utilizing the Orlicz functions.
 
	\section{The $t-$Berezin norm of operators}
    

 We begin our study with some basic properties of the $t-$ Berezin norm, which immediately follows from its definition.

	\begin{prop} \label{basic properties}
	If $A \in \mathcal{B}(\mathcal{H})$, then the following results hold:
	\begin{enumerate}
		\item $\| A\|_{t-ber} = \| A^*\|_{t-ber}.$
        \item  $\| A\|_{t- ber} =0 $ if and only if $A=0.$
		\item $\|\lambda A\|_{t- ber} =|\lambda| \| A\|_{t- ber} $ for all $\lambda \in \mathbb{C}.$
		\item $\| A\|_{t- ber} = \| A\|_{(1-t)- ber}. $
        \item $\| A+B\|_{t- ber} \leq \| A\|_{t- ber} + \| B\|_{t- ber}$ for every $B\in \mathcal{B}(\mathcal{H}).$
	\end{enumerate}
	\end{prop}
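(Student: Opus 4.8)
The plan is to verify each of the five properties directly from the definition
$$\|A\|_{t-ber} = \sup_{\lambda,\mu\in\Omega}\left\{ t|\langle A\hat{k}_\lambda,\hat{k}_\mu\rangle| + (1-t)|\langle A^*\hat{k}_\lambda,\hat{k}_\mu\rangle|\right\},$$
since each is a short supremum manipulation. For (1), I would use the identity $\langle A^*\hat{k}_\lambda,\hat{k}_\mu\rangle = \overline{\langle A\hat{k}_\mu,\hat{k}_\lambda\rangle}$, so that $|\langle A^*\hat{k}_\lambda,\hat{k}_\mu\rangle| = |\langle A\hat{k}_\mu,\hat{k}_\lambda\rangle|$ and similarly $|\langle A\hat{k}_\lambda,\hat{k}_\mu\rangle| = |\langle (A^*)^*\hat{k}_\lambda,\hat{k}_\mu\rangle| = |\langle A^*\hat{k}_\mu,\hat{k}_\lambda\rangle|$; the expression defining $\|A\|_{t-ber}$ then becomes, after swapping the roles of the dummy indices $\lambda$ and $\mu$ in the supremum, exactly the expression defining $\|A^*\|_{t-ber}$. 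For (3), homogeneity, I would pull $|\lambda|$ out of both terms using $|\langle (\lambda A)\hat{k}_\mu,\hat{k}_\nu\rangle| = |\lambda|\,|\langle A\hat{k}_\mu,\hat{k}_\nu\rangle|$ and $(\lambda A)^* = \bar\lambda A^*$, so $|\langle(\lambda A)^*\hat{k}_\mu,\hat{k}_\nu\rangle| = |\lambda|\,|\langle A^*\hat{k}_\mu,\hat{k}_\nu\rangle|$, and then factor $|\lambda|$ out of the supremum.

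For (2), the ``only if'' direction is the substantive one: if $\|A\|_{t-ber}=0$ then every summand is zero, and since $t$ and $1-t$ are nonnegative with at least one positive (for $t\in(0,1)$ both are positive, for $t=0$ or $t=1$ one is $1$), I can conclude at least one of $\langle A\hat{k}_\lambda,\hat{k}_\mu\rangle$, $\langle A^*\hat{k}_\lambda,\hat{k}_\mu\rangle$ vanishes for all $\lambda,\mu$; but by the symmetry noted above these two quantities have the same modulus after index-swap, so in all cases $\langle A\hat{k}_\lambda,\hat{k}_\mu\rangle = 0$ for all $\lambda,\mu\in\Omega$. Since the family $\{\hat{k}_\lambda:\lambda\in\Omega\}$ spans a dense subspace of $\mathcal{H}$ (as $f\perp k_\lambda$ for all $\lambda$ forces $f(\lambda)=0$ for all $\lambda$, hence $f=0$), it follows that $A=0$; the ``if'' direction is trivial. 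For (4), the reflection $t\mapsto 1-t$, I would observe that interchanging $t$ with $1-t$ simply swaps the coefficients of $|\langle A\hat{k}_\lambda,\hat{k}_\mu\rangle|$ and $|\langle A^*\hat{k}_\lambda,\hat{k}_\mu\rangle|$ in the bracket; then, as in the proof of (1), swapping the dummy variables $\lambda\leftrightarrow\mu$ inside the supremum and using $|\langle A^*\hat{k}_\mu,\hat{k}_\lambda\rangle| = |\langle A\hat{k}_\lambda,\hat{k}_\mu\rangle|$ turns the $(1-t)$-expression back into the $t$-expression.

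For (5), the triangle inequality, I would use linearity $(A+B)\hat{k}_\lambda = A\hat{k}_\lambda + B\hat{k}_\lambda$ together with $(A+B)^* = A^* + B^*$, apply the ordinary triangle inequality for the modulus inside the bracket to bound
$$t|\langle (A+B)\hat{k}_\lambda,\hat{k}_\mu\rangle| + (1-t)|\langle (A+B)^*\hat{k}_\lambda,\hat{k}_\mu\rangle|$$
by the sum of the corresponding expression for $A$ and that for $B$, then use the subadditivity of $\sup$ over the (single, common) index pair to split the supremum into $\|A\|_{t-ber} + \|B\|_{t-ber}$. None of these steps presents a genuine obstacle; the only point requiring a little care is being explicit about the index relabelling in (1) and (4) and, in (2), recording that $\{\hat{k}_\lambda\}$ has dense span so that the vanishing of all the ``matrix entries'' $\langle A\hat{k}_\lambda,\hat{k}_\mu\rangle$ actually forces $A=0$.
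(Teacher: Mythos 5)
Your proposal is correct, and it fills in exactly the direct verification that the paper leaves implicit (the paper states that the proposition ``immediately follows from its definition'' and gives no proof). Your handling of the edge cases $t=0,1$ in item (2) via the identity $|\langle A^*\hat{k}_\lambda,\hat{k}_\mu\rangle| = |\langle A\hat{k}_\mu,\hat{k}_\lambda\rangle|$, and your appeal to the density of the span of $\{k_\lambda\}$ to conclude $A=0$, are precisely the points that need to be made explicit, and you make them correctly.
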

    
From this proposition, we can say that $\|\cdot\|_{t-ber}$ defines a norm on $\mathcal{B}(\mathcal{H})$ and is equivalent to the Berezin norm via the relation
\begin{equation} \label{relation inequality}
			\frac{1}{2} \| A\|_{ber} \leq \max \{t, 1-t\} \| A\|_{ ber} \leq  \| A\|_{t- ber} \leq \| A\|_{ber}.  
			\end{equation}


The above inequalities are sharp. Consider $A = \begin{pmatrix}
    0&1\\
    0&0
\end{pmatrix}$ is an operator on the reproducing kernel Hilbert space $\mathbb{C}^2,$ then $\|A\|_{ber} =1$  and $\|A\|_{t-ber} = \max\{t, 1-t\}$.
We now study an equivalent characterization for the equality of the last inequality in \eqref{relation inequality}.

	\begin{theorem}\label{th 5}
		If $A\in \mathcal{B}(\mathcal{H})$, then the following two statements are equivalent:\\
		(1) $\|A\|_{t-ber}=\|A\|_{ber}.$\\
		(2) There exist sequences $\{ \tau_n\}$ and $\{\eta_n\}$ in $\Omega$ such that \[\lim_{n \to \infty} |\langle A\hat{k}_{\tau_n},\hat{k}_{\eta_n}\rangle|=    \lim_{n \to \infty} | \langle A^*\hat{k}_{\tau_n},\hat{k}_{\eta_n}\rangle| =\|A\|_{ber}.\]	
	\end{theorem}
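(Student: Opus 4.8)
The plan is to prove the two implications separately, with the direction $(2)\Rightarrow(1)$ being the routine one and $(1)\Rightarrow(2)$ requiring a compactness/extraction argument.

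For $(2)\Rightarrow(1)$, I would simply feed the given sequences into the definition of the $t$-Berezin norm. Since
\[
\|A\|_{t-ber}\ge t|\langle A\hat{k}_{\tau_n},\hat{k}_{\eta_n}\rangle|+(1-t)|\langle A^*\hat{k}_{\tau_n},\hat{k}_{\eta_n}\rangle|
\]
for every $n$, letting $n\to\infty$ gives $\|A\|_{t-ber}\ge t\|A\|_{ber}+(1-t)\|A\|_{ber}=\|A\|_{ber}$, and the reverse inequality is the already-established \eqref{relation inequality}. So this direction is a one-line estimate.

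For $(1)\Rightarrow(2)$, assume $\|A\|_{t-ber}=\|A\|_{ber}=:M$. By definition of the supremum defining $\|A\|_{t-ber}$, there exist sequences $\{\tau_n\},\{\eta_n\}\subset\Omega$ with
\[
t|\langle A\hat{k}_{\tau_n},\hat{k}_{\eta_n}\rangle|+(1-t)|\langle A^*\hat{k}_{\tau_n},\hat{k}_{\eta_n}\rangle|\longrightarrow M.
\]
Write $a_n=|\langle A\hat{k}_{\tau_n},\hat{k}_{\eta_n}\rangle|$ and $b_n=|\langle A^*\hat{k}_{\tau_n},\hat{k}_{\eta_n}\rangle|$; both lie in $[0,M]$ since $\|A\|_{ber}=\|A^*\|_{ber}=M$. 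We have $ta_n+(1-t)b_n\to M$. I would handle the edge cases $t=0$ and $t=1$ first: if $t=1$ then $a_n\to M$, and then by the equality $\|A^*\|_{ber}=M$ one can separately choose sequences realizing $b_n\to M$ — but here the subtlety is that one needs the \emph{same} pair of sequences. Actually the cleaner route is uniform: for $t\in(0,1)$, the convex combination $ta_n+(1-t)b_n\to M$ with $a_n,b_n\le M$ forces, by a standard squeezing argument, both $a_n\to M$ and $b_n\to M$. Indeed $M-ta_n-(1-t)b_n = t(M-a_n)+(1-t)(M-b_n)$, a sum of two nonnegative terms tending to $0$, hence each tends to $0$; since $t,1-t>0$ this yields $a_n\to M$ and $b_n\to M$. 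For the boundary cases $t=0$ or $t=1$, statement $(1)$ reads $\|A\|_{ber}=\|A^*\|_{ber}$ (by Proposition~\ref{basic properties}(1) applied with the trivial identification, or directly since $\|A\|_{0-ber}=\|A^*\|_{ber}$), which is automatic, so $(1)$ gives no information and one should check whether $(2)$ is still forced — here I would note that the statement is presumably intended for $t\in(0,1)$, or alternatively that when $t\in\{0,1\}$ the equivalence degenerates and should be stated with that caveat.

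The main obstacle I anticipate is precisely this boundary issue: for $t=0$ or $t=1$, condition $(1)$ is vacuous (it always holds), whereas condition $(2)$ demands a single pair of sequences simultaneously extremal for $A$ and $A^*$, which need not exist in general. So I expect the theorem as stated to be correct only for $t\in(0,1)$, and I would either restrict to that range or insert the hypothesis explicitly; for $t\in(0,1)$ the squeezing argument above is clean and complete, requiring no compactness on $\Omega$ at all.
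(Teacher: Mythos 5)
Your proof follows essentially the same route as the paper's: extract supremum-attaining sequences for the $t$-Berezin norm, squeeze to get both limits equal to $\|A\|_{ber}$, and for the converse feed the sequences into the definition and invoke \eqref{relation inequality}. Two points where your write-up is actually more careful than the paper's. First, the paper splits $\lim_n\bigl(t a_n+(1-t)b_n\bigr)$ into $t\lim_n a_n+(1-t)\lim_n b_n$ without justifying that the individual limits exist; your decomposition $M-\bigl(ta_n+(1-t)b_n\bigr)=t(M-a_n)+(1-t)(M-b_n)$, a vanishing sum of nonnegative terms, establishes the existence of both limits and their values in one stroke. Second, your observation about the endpoints is correct and substantive: for $t\in\{0,1\}$ statement (1) holds automatically (since $\|A\|_{0-ber}=\|A^*\|_{ber}=\|A\|_{ber}$), while (2) can fail. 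Indeed the paper's own example $A=\begin{pmatrix}0&1\\0&0\end{pmatrix}$ on $\mathbb{C}^2$ is a counterexample: $\|A\|_{t-ber}=\max\{t,1-t\}=1=\|A\|_{ber}$ at $t\in\{0,1\}$, yet no pair of sequences makes $|\langle A\hat{k}_{\tau_n},\hat{k}_{\eta_n}\rangle|$ and $|\langle A^*\hat{k}_{\tau_n},\hat{k}_{\eta_n}\rangle|$ both tend to $1$. So the theorem should carry the hypothesis $t\in(0,1)$, which the paper omits; your restriction is the right fix, and with it your argument is complete.
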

	\begin{proof}
	First we prove	$(1) \implies (2)$: Suppose $\|A\|_{t-ber}=\|A\|_{ber}$\  holds. As $A$ is bounded, there exist sequences of normalized kernels  $\{\hat{k}_{\tau_n}\}$ and $\{\hat{k}_{\eta_n}\}$ in $\mathcal{H}$ such that $$\lim_{n \to \infty} (t|\langle A\hat{k}_{\tau_n},\hat{k}_{\eta_n}\rangle|+ (1-t)| \langle A^*\hat{k}_{\tau_n},\hat{k}_{\eta_n}\rangle|) = \|A\|_{t-ber}.$$
		Therefore, we have 
		\begin{eqnarray*}
			\|A\|_{ber} &=& \|A\|_{t-ber} 
			= \lim_{n \to \infty} (t|\langle A\hat{k}_{\tau_n},\hat{k}_{\eta_n}\rangle|+ (1-t)| \langle A^*\hat{k}_{\tau_n},\hat{k}_{\eta_n}\rangle|\\
			&=& t \lim_{n \to \infty} |\langle A\hat{k}_{\tau_n},\hat{k}_{\eta_n}\rangle|+   (1-t) \lim_{n \to \infty} | \langle A^*\hat{k}_{\tau_n},\hat{k}_{\eta_n}\rangle|\\
			&\leq& t \|A\|_{ber} + (1-t) \|A\|_{ber}= \|A\|_{ber}.
		\end{eqnarray*}
		This implies \[\lim_{n \to \infty} |\langle A\hat{k}_{\tau_n}, \hat{k}_{\eta_n}\rangle|=    \lim_{n \to \infty} | \langle A^*\hat{k}_{\tau_n}, \hat{k}_{\eta_n}\rangle| =\|A\|_{ber}.\]
		
		We now prove \noindent $(2) \implies (1)$:
		Suppose there exist sequences $\{ \tau_n\}$ and $\{\eta_n\}$ in $\Omega$  such that 
 $\lim_{n \to \infty} |\langle A\hat{k}_{\tau_n},\hat{k}_{\eta_n}\rangle|=    \lim_{n \to \infty} | \langle A^*\hat{k}_{\tau_n}, \hat{k}_{\eta_n}\rangle| =\|A\|_{ber}.$
		Then, we have
		\[\|A\|_{t-ber} 
		\geq  \lim_{n \to \infty} (t|\langle A\hat{k}_{\tau_n}, \hat{k}_{\eta_n}\rangle|+ (1-t)| \langle A^*\hat{k}_{\tau_n}, \hat{k}_{\eta_n}\rangle| = \|A\|_{ber}.\]
		From this and using \eqref{relation inequality}, we conclude $\|A\|_{t-ber}=\|A\|_{ber}.$ 
	\end{proof}

It is observed that the $t-$Berezin norm is not an algebra norm, that is, for any two operators $A, B \in \mathcal{B}(\mathcal{H}),$ the inequality $\|AB\|_{t-ber} \leq \|A\|_{t-ber} \|B\|_{t-ber}$ does not necessarily hold. To illustrate this, consider $A =\begin{pmatrix}
	    0&1\\
        0&0
	\end{pmatrix}$ and  $ B = \begin{pmatrix}
	    0&0\\
        1&0
	\end{pmatrix}$ are operators on the two-dimensional reproducing kernel Hilbert space $\mathbb{C}^2.$ Then $ \|A\|_{t-ber} = \|B\|_{t-ber} =\max\{t, 1-t\}< 1,$ for $t \neq 0,1$,  whereas $\|AB\|_{t-ber} =1.$ In this connection, we now derive an inequality for $t-$Berezin norm  (involving the spectral radius) for the product of two operators.


 To establish this, we first note the following well-known lemma:
\begin{lemma}\label{mixed schwarz} \cite{Res}
		Let $A, B \in \mathcal{B}(\mathcal{H})$ with $|A|B = B^*|A|.$   Let $\psi, \eta$ be two nonnegative continuous functions on $[0, \infty) $ satisfying $\psi(t) \eta(t)=t.$ Then for any $x, y \in \mathcal{H}$ \[|\langle Ax, y \rangle| \leq r(B)\left\| \psi(|A|)x\right\| \left\|\eta(|A^*|)y \right\|.\]
        
        In particular, for $\psi(t)= \eta(t)=\sqrt{t}, \, t\geq 0$, we have  
        $|\langle Ax, y \rangle| \leq r(B)\left\| |A|^{1/2}x\right\| \left\||A^*|^{1/2}y \right\|.$
	\end{lemma}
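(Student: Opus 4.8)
Since this lemma is quoted from \cite{Res}, I shall only outline how one proves a mixed‑Schwarz estimate of this kind. The plan is to pass to the polar decomposition $A=U|A|$, where $U$ is a partial isometry --- in particular a contraction, so $\|Uz\|\le\|z\|$ for all $z\in\mathcal H$ --- with $U^{*}U$ the orthogonal projection onto $\overline{\operatorname{ran}\,|A|}$ and $|A^{*}|=U|A|U^{*}$. Throughout I would use the intertwining identities $Uf(|A|)=f(|A^{*}|)U$ and, taking adjoints, $f(|A|)U^{*}=U^{*}f(|A^{*}|)$, valid for every continuous $f$ on $\sigma(|A|)$; these are proved first for the monomials $t^{n}$, $n\ge 1$, using $U^{*}U|A|=|A|$, and then extended to polynomials and to all continuous $f$ by uniform approximation.

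The factor $r(B)$ is produced by the hypothesis $|A|B=B^{*}|A|$, which says precisely that $|A|B$ is self‑adjoint, i.e.\ that $B$ is symmetric for the semi‑inner product $\langle|A|\,\cdot\,,\,\cdot\,\rangle$. One then checks that $C:=|A|^{1/2}B|A|^{-1/2}$ is self‑adjoint and is similar to $B$ (the similarity being implemented by $|A|^{1/2}$), whence $\|C\|=r(C)=r(B)$, and $|A|^{1/2}B=C|A|^{1/2}$, so $|A|B=|A|^{1/2}C|A|^{1/2}$. In the case $\psi(t)=\eta(t)=\sqrt t$ --- the form highlighted in the statement --- one simply writes the operator in question as $U|A|^{1/2}C|A|^{1/2}$ and uses $|A|^{1/2}U^{*}=U^{*}|A^{*}|^{1/2}$ to obtain
\begin{align*}
\bigl|\langle U|A|^{1/2}C|A|^{1/2}x,\,y\rangle\bigr|
&=\bigl|\langle C|A|^{1/2}x,\,|A|^{1/2}U^{*}y\rangle\bigr|
=\bigl|\langle UC|A|^{1/2}x,\,|A^{*}|^{1/2}y\rangle\bigr|\\
&\le\|C\|\,\bigl\||A|^{1/2}x\bigr\|\,\bigl\||A^{*}|^{1/2}y\bigr\|
=r(B)\,\bigl\||A|^{1/2}x\bigr\|\,\bigl\||A^{*}|^{1/2}y\bigr\|;
\end{align*}
with $B=I$ this collapses to the classical mixed Schwarz inequality. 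For general $\psi,\eta$ with $\psi(t)\eta(t)=t$ one splits $|A|=\psi(|A|)\eta(|A|)$ via the functional calculus and runs the same scheme, transferring one outer factor to the $y$‑slot through the intertwining identities.

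The only real obstacle is technical: when $|A|$ fails to be boundedly invertible, $|A|^{1/2}B|A|^{-1/2}$ (and the analogous expressions built from $\psi,\eta$) has no literal meaning, so the clean similarity argument above must be replaced by the machinery of semi‑Hilbertian spaces --- the hypothesis $|A|B=B^{*}|A|$ makes $B$ an $|A|$‑self‑adjoint operator, for which the relevant $|A|$‑operator seminorm is controlled by $r(B)$ --- together with careful bookkeeping of the ranges on which the functional‑calculus identities are applied (in particular the behaviour of $B$ on $\ker|A|$). These points, routine but not entirely trivial, are exactly what is handled in \cite{Res}.
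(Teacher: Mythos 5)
The paper offers no proof of this lemma at all --- it is imported from Kittaneh \cite{Res} --- so there is nothing in-house to compare your sketch against; I can only judge it on its own terms. Your treatment of the case $\psi(t)=\eta(t)=\sqrt t$ is essentially sound, with two caveats. First, the statement as printed is a misquotation: the left-hand side must be $|\langle ABx,y\rangle|$, not $|\langle Ax,y\rangle|$ (otherwise $B=0$ would force $A=0$; and the paper itself applies the lemma to $AB$ in the proof of Theorem \ref{th 3}). Your displayed computation in fact bounds $\langle ABx,y\rangle$, since $U|A|^{1/2}C|A|^{1/2}=U|A|B=AB$ --- so you have silently proved the corrected statement; say so explicitly. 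Second, your deferral of the non-invertible case to ``semi-Hilbertian machinery'' understates the difficulty: the similarity $C=|A|^{1/2}B|A|^{-1/2}$ cannot be rescued by regularization, because $(|A|+\varepsilon I)B=B^{*}(|A|+\varepsilon I)$ together with the hypothesis forces $B=B^{*}$. The standard repair is a genuinely different argument: iterate $|A|B^{2^{n}}=(B^{2^{n}})^{*}|A|$ against the Cauchy--Schwarz inequality to get $\||A|^{1/2}Bx\|^{2^{n}}\le\||A|^{1/2}B^{2^{n}}x\|\,\||A|^{1/2}x\|^{2^{n}-1}$, then invoke the Gelfand formula $\lim_{n}\|B^{n}\|^{1/n}=r(B)$.

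The genuine gap is your final claim that the general $(\psi,\eta)$ case follows ``by the same scheme.'' It cannot: the scheme would require $\psi^{2}(|A|)B=B^{*}\psi^{2}(|A|)$, which does not follow from $|A|B=B^{*}|A|$. Worse, the lemma is actually false in the stated generality. Take
\[
|A|=A=\begin{pmatrix}1&0\\0&4\end{pmatrix},\qquad B=\begin{pmatrix}0&4\\1&0\end{pmatrix},\qquad \psi(t)=t,\quad \eta(t)=1,
\]
and let $e_1,e_2$ be the standard basis vectors of $\mathbb{C}^{2}$. Then $|A|B=B^{*}|A|$, $r(B)=2$, and
\[
|\langle ABe_1,e_2\rangle|=4>2=r(B)\,\|\psi(|A|)e_1\|\,\|\eta(|A^{*}|)e_2\|,
\]
so no proof of the general statement can exist. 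What is true --- and all the paper ever uses --- are the two special cases: $\psi=\eta=\sqrt t$ with arbitrary $B$ (your argument, suitably completed), and arbitrary $\psi,\eta$ with $B=I$, which is the classical generalized mixed Schwarz inequality and needs no spectral-radius input. You should restrict your claim accordingly rather than assert that the general case runs the same way.
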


\begin{theorem} \label{th 3}
		If $A, B \in \mathcal{B}(\mathcal{H})$ with $|A|B = B^*|A|$, then \[ \| AB\|_{t-ber} \leq r(B) \sqrt{\left\| t|A| + (1-t) |A^*| \right\|_{ber}^{} \left\| t|A^*| + (1-t) |A|\right\|_{ber}^{}}.\] 
        
        In particular, for $t=1/2,$
        \[ \| AB\|_{\frac12 -ber} \leq \frac12 r(B)\left\| |A| + |A^*| \right\|_{ber}^{}.\]
\end{theorem}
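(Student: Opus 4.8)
The plan is to bound the two inner products hidden inside $\|AB\|_{t-ber}$ one at a time by the generalized mixed Schwarz inequality (Lemma~\ref{mixed schwarz}), and then to recombine the resulting quantities by a carefully chosen Cauchy--Schwarz step so that the two emerging factors become exactly the Berezin symbols of the positive operators $t|A|+(1-t)|A^*|$ and $t|A^*|+(1-t)|A|$. Fix $\lambda,\mu\in\Omega$. Since $|A|B=B^*|A|$, Lemma~\ref{mixed schwarz} applied with $\psi(s)=\eta(s)=\sqrt{s}$ gives
\[
|\langle AB\hat{k}_\lambda,\hat{k}_\mu\rangle|\le r(B)\,\| |A|^{1/2}\hat{k}_\lambda\|\,\| |A^*|^{1/2}\hat{k}_\mu\|,
\]
and, applying the same estimate with the roles of $\lambda$ and $\mu$ exchanged together with the identity $|\langle (AB)^*\hat{k}_\lambda,\hat{k}_\mu\rangle|=|\langle AB\hat{k}_\mu,\hat{k}_\lambda\rangle|$,
\[
|\langle (AB)^*\hat{k}_\lambda,\hat{k}_\mu\rangle|\le r(B)\,\| |A|^{1/2}\hat{k}_\mu\|\,\| |A^*|^{1/2}\hat{k}_\lambda\|.
\]

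Next I would multiply these by $t$ and $1-t$ respectively, add them, and apply the Cauchy--Schwarz inequality in $\mathbb{C}^2$ to the two-term sum on the right, pairing the vectors $\big(\sqrt{t}\,\| |A|^{1/2}\hat{k}_\lambda\|,\,\sqrt{1-t}\,\| |A^*|^{1/2}\hat{k}_\lambda\|\big)$ and $\big(\sqrt{t}\,\| |A^*|^{1/2}\hat{k}_\mu\|,\,\sqrt{1-t}\,\| |A|^{1/2}\hat{k}_\mu\|\big)$. Using $\| |X|^{1/2}\hat{k}_\nu\|^2=\langle|X|\hat{k}_\nu,\hat{k}_\nu\rangle$ this produces
\begin{align*}
&t|\langle AB\hat{k}_\lambda,\hat{k}_\mu\rangle|+(1-t)|\langle (AB)^*\hat{k}_\lambda,\hat{k}_\mu\rangle| \\
&\qquad\le r(B)\sqrt{\langle (t|A|+(1-t)|A^*|)\hat{k}_\lambda,\hat{k}_\lambda\rangle}\,\sqrt{\langle (t|A^*|+(1-t)|A|)\hat{k}_\mu,\hat{k}_\mu\rangle}.
\end{align*}
Taking the supremum over $\lambda\in\Omega$ and, independently, over $\mu\in\Omega$, and observing that $t|A|+(1-t)|A^*|$ and $t|A^*|+(1-t)|A|$ are positive (being convex combinations of $|A|\ge 0$ and $|A^*|\ge 0$), so that by \eqref{Pintu CAOT} their Berezin numbers coincide with their Berezin norms, one arrives at $\|AB\|_{t-ber}\le r(B)\sqrt{\|t|A|+(1-t)|A^*|\|_{ber}\,\|t|A^*|+(1-t)|A|\|_{ber}}$. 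The case $t=1/2$ is then immediate, since both Berezin norms on the right reduce to $\tfrac12\||A|+|A^*|\|_{ber}$.

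The one genuinely delicate point is the choice of pairing in the Cauchy--Schwarz step: the $t$-weighted cross term must be split so that $|A|$ keeps the variable $\lambda$ while $|A^*|$ keeps $\mu$ (and the $(1-t)$-weighted term the opposite way around), which is precisely what forces the two resulting factors to collapse to honest Berezin symbols of $t|A|+(1-t)|A^*|$ and $t|A^*|+(1-t)|A|$; any other grouping leaves residual mixed $\lambda$--$\mu$ terms that do not simplify. The remaining ingredients — the mixed Schwarz estimate of Lemma~\ref{mixed schwarz} and the identity $\textbf{ber}(\cdot)=\|\cdot\|_{ber}$ for positive operators from \eqref{Pintu CAOT} — are used off the shelf.
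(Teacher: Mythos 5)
Your proposal is correct and follows essentially the same route as the paper's proof: mixed Schwarz applied to both $\langle AB\hat{k}_\lambda,\hat{k}_\mu\rangle$ and its adjoint counterpart, then the $\mathbb{C}^2$ Cauchy--Schwarz step with exactly the pairing the paper uses, collapsing the factors to the Berezin symbols of $t|A|+(1-t)|A^*|$ and $t|A^*|+(1-t)|A|$. The only cosmetic difference is that you invoke the equality \eqref{Pintu CAOT} at the end, whereas the paper only needs the trivial bound $\langle X\hat{k}_\nu,\hat{k}_\nu\rangle\le\|X\|_{ber}$; both are valid.
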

    
	\begin{proof}
	Following Lemma \ref{mixed schwarz}, for any two normalized reproducing kernels $\hat{k}_\lambda, \hat{k}_\mu$ in $\mathcal{H},$ we have that 
	$|\langle AB \hat{k}_\lambda, \hat{k}_\mu \rangle| \leq r(B) \left\||A|^{\frac{1}{2} }\hat{k}_\lambda \right\| \left\||A^*|^{\frac{1}{2} }\hat{k}_\mu \right\|. $
	Using this, we get
	\begin{eqnarray*}
		&&t |\langle AB \hat{k}_\lambda, \hat{k}_\mu \rangle|  + (1-t) |\langle  \hat{k}_\lambda, AB\hat{k}_\mu \rangle| \\
		&\leq& r(B) t \langle |A| \hat{k}_\lambda, \hat{k}_\lambda \rangle^{\frac{1}{2}} \langle |A^*| \hat{k}_\mu, \hat{k}_\mu \rangle^{\frac{1}{2}}+ r(B) (1-t) \langle |A^*| \hat{k}_\lambda, \hat{k}_\lambda \rangle^{\frac{1}{2}} \langle |A| \hat{k}_\mu, \hat{k}_\mu \rangle^{\frac{1}{2}}\\
	&\leq&	r(B) \left\{t  \langle |A| \hat{k}_\lambda, \hat{k}_\lambda \rangle + (1-t)  \langle |A^*| \hat{k}_\lambda, \hat{k}_\lambda \rangle\right\}^{1/2}  \left\{t\langle |A^*| \hat{k}_\mu, \hat{k}_\mu \rangle + (1-t)  \langle |A| \hat{k}_\mu, \hat{k}_\mu \rangle \right\}^{1/2}\\
	&& \,\,\,\,\,\,\,\,\,\,\,\,\,\,\,\,\,\,\,\,\,\,\,\,\,\,\,\,\,\,\,\,\,\,\,\,\,\,\,\,\,\,\,\,\,\,\,\,\,\,\,\,\,\,\,\,\,\,\,\,\,\,\,\,\,\,\,\,\,\,\,\,\,\,\,\,\,\,\,\,\,\,\,\,\,\,\,\,\,\,\,\,\,\,\,\,\,\,\,\,\,\,\,\,\,\,\,\,\,\,\,\,(\mbox{from the Cauchy-Schwarz inequality})\\
	&=& r(B)  \left\langle \left(t |A| + (1-t) |A^*|\right)\hat{k}_\lambda, \hat{k}_\lambda \right \rangle ^{\frac{1}{2}}  \left\langle \left(t |A^*| + (1-t) |A|\right)\hat{k}_\mu, \hat{k}_\mu \right \rangle^{\frac{1}{2}} \\
	&\leq&  r(B) \left\| t|A| + (1-t) |A^*| \right\|_{ber}^{{1}/{2}} \left\| t|A^*| + (1-t) |A|\right\|_{ber}^{{1}/{2}}.
	\end{eqnarray*}
	Taking the supremum over all $\lambda, \mu \in \Omega,$ we obtain the desired first inequality.
	\end{proof}
    
\begin{remark} 
For positive operators $A, B \in \mathcal{B}(\mathcal{H})$ and $t \in [0,1]$, we have the following inequality (see e.g. \cite{Axioms}):
\begin{eqnarray}\label{Axioms convex}
   \left\|tA + (1-t)B\right\|^r_{ber} \leq \left\| tA^r+(1-t)B^r\right\|_{ber}, \quad \forall r \geq 1. 
\end{eqnarray}
From the definition of the $t-$Berezin norm, it is observed that 
\begin{eqnarray}\label{new}
    \textbf{ber}(A) \leq \min_{t\in [0,1] } \|A\|_{t-ber}.
\end{eqnarray}
Now, combining the inequalities \eqref{Axioms convex},  \eqref{new} and Theorem \ref{th 3}, we get
\begin{eqnarray}\label{new-1}
    \textbf{ber}^r(A) \leq \min_{t\in [0,1] } \|A\|_{t-ber}^r \leq \frac12\left\| |A|^r + |A^*|^r \right\|_{ber}.
\end{eqnarray}
Clearly, $\min_{t\in [0,1] } \|A\|_{t-ber}^r$ is a stronger upper bound of $\textbf{ber}^r(A)$ than the existing bound 
    \begin{equation} \label{Taghavi}
        \textbf{ber}^r(A) \leq \frac12 \left\| |A|^r + |A^*|^r \right\|_{ber}, \quad \textit{for all $r\geq 1,$}
    \end{equation}
    proved by  Taghavi et al. \cite{Taghavi}. 
\end{remark}

Our next theorem shows that the bound \eqref{new} is stronger than $
    \textbf{ber}^2(A) \leq \frac12 \left\||A|^2 + |A^*|^2 \right\|_{ber} ,$
as studied in \cite[Theorem 3.2]{BMA}.

	\begin{theorem} \label{th 4}
		If $A \in \mathcal{B}(\mathcal{H})$, then \[\| A\|_{t-ber} \leq \sqrt{\| tA^*A+(1-t)AA^*\|_{ber}}.\]
        
        Moreover, if $t=\frac{1}{2},$ we have $\| A\|_{1/2-ber} \leq \sqrt{\frac{1}{2} \| A^*A+AA^*\|_{ber}} .$
        \end{theorem}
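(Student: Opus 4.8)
The plan is to mimic the proof of Theorem \ref{th 3} but drop the need for Lemma \ref{mixed schwarz}, using instead the elementary mixed Schwarz inequality $|\langle A x, y\rangle| \le \||A|^{1/2} x\| \, \||A^*|^{1/2} y\|$ applied directly to the pair of normalized reproducing kernels $\hat{k}_\lambda, \hat{k}_\mu$. Thus for any $\lambda, \mu \in \Omega$ I would write $|\langle A\hat{k}_\lambda, \hat{k}_\mu\rangle| \le \langle |A|\hat{k}_\lambda, \hat{k}_\lambda\rangle^{1/2} \langle |A^*|\hat{k}_\mu, \hat{k}_\mu\rangle^{1/2}$, and symmetrically (replacing $A$ by $A^*$, and using $|A^*|$ and $|(A^*)^*| = |A|$) $|\langle A^*\hat{k}_\lambda, \hat{k}_\mu\rangle| \le \langle |A^*|\hat{k}_\lambda, \hat{k}_\lambda\rangle^{1/2} \langle |A|\hat{k}_\mu, \hat{k}_\mu\rangle^{1/2}$.

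Next I would form the convex combination $t|\langle A\hat{k}_\lambda, \hat{k}_\mu\rangle| + (1-t)|\langle A^*\hat{k}_\lambda, \hat{k}_\mu\rangle|$ and bound it above by
\[
t \, \langle |A|\hat{k}_\lambda, \hat{k}_\lambda\rangle^{1/2} \langle |A^*|\hat{k}_\mu, \hat{k}_\mu\rangle^{1/2} + (1-t) \, \langle |A^*|\hat{k}_\lambda, \hat{k}_\lambda\rangle^{1/2} \langle |A|\hat{k}_\mu, \hat{k}_\mu\rangle^{1/2}.
\]
Then, exactly as in the proof of Theorem \ref{th 3}, I would apply the Cauchy--Schwarz inequality for sums (that is, $a_1 b_1 + a_2 b_2 \le (a_1^2+a_2^2)^{1/2}(b_1^2+b_2^2)^{1/2}$) with $a_1 = \sqrt{t}\,\langle |A|\hat{k}_\lambda,\hat{k}_\lambda\rangle^{1/2}$, $a_2 = \sqrt{1-t}\,\langle |A^*|\hat{k}_\lambda,\hat{k}_\lambda\rangle^{1/2}$, $b_1 = \sqrt{t}\,\langle |A^*|\hat{k}_\mu,\hat{k}_\mu\rangle^{1/2}$, $b_2 = \sqrt{1-t}\,\langle |A|\hat{k}_\mu,\hat{k}_\mu\rangle^{1/2}$. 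This yields the bound
\[
\langle (t|A| + (1-t)|A^*|)\hat{k}_\lambda, \hat{k}_\lambda\rangle^{1/2} \, \langle (t|A^*| + (1-t)|A|)\hat{k}_\mu, \hat{k}_\mu\rangle^{1/2}.
\]

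Finally I would bound each factor by the corresponding Berezin norm: $\langle (t|A| + (1-t)|A^*|)\hat{k}_\lambda, \hat{k}_\lambda\rangle \le \textbf{ber}(t|A| + (1-t)|A^*|) = \|t|A| + (1-t)|A^*|\|_{ber}$, where the last equality uses \eqref{Pintu CAOT} since $t|A| + (1-t)|A^*|$ is positive; and similarly for the $\mu$-factor. Taking the supremum over all $\lambda, \mu \in \Omega$ gives $\|A\|_{t-ber} \le \sqrt{\|t|A|+(1-t)|A^*|\|_{ber}\,\|t|A^*|+(1-t)|A|\|_{ber}}$. To reach the stated cleaner form I would then invoke $\|t|A| + (1-t)|A^*|\|_{ber} \le \|t|A|^2 + (1-t)|A^*|^2\|_{ber}^{1/2} = \|tA^*A + (1-t)AA^*\|_{ber}^{1/2}$ via \eqref{Axioms convex} with $r=2$ (and symmetrically swapping $t \leftrightarrow 1-t$ for the other factor), so the geometric mean of the two collapses to $\|tA^*A + (1-t)AA^*\|_{ber}^{1/2}$. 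The $t = 1/2$ case is then immediate. I do not foresee a genuine obstacle here; the only point requiring slight care is the bookkeeping in the swap $t \leftrightarrow 1-t$ when combining the two factors via \eqref{Axioms convex}, and noting that Proposition \ref{basic properties}(4) makes this symmetry harmless.
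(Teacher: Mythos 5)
Your argument is fine up to the intermediate bound $\|A\|_{t-ber}\le \sqrt{\left\|t|A|+(1-t)|A^*|\right\|_{ber}\left\|t|A^*|+(1-t)|A|\right\|_{ber}}$ (this is just Theorem \ref{th 3} with $B=I$), but the final step does not work for $t\neq \tfrac12$. Applying \eqref{Axioms convex} with $r=2$ to the two factors gives $\left\|t|A|+(1-t)|A^*|\right\|_{ber}\le \left\|tA^*A+(1-t)AA^*\right\|_{ber}^{1/2}$ and $\left\|t|A^*|+(1-t)|A|\right\|_{ber}\le \left\|tAA^*+(1-t)A^*A\right\|_{ber}^{1/2}$, so your chain terminates at
$\left(\left\|tA^*A+(1-t)AA^*\right\|_{ber}\,\left\|tAA^*+(1-t)A^*A\right\|_{ber}\right)^{1/4}$. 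This geometric mean does \emph{not} collapse to $\left\|tA^*A+(1-t)AA^*\right\|_{ber}^{1/2}$: that would require $\left\|tAA^*+(1-t)A^*A\right\|_{ber}\le\left\|tA^*A+(1-t)AA^*\right\|_{ber}$, which is false in general. Concretely, take $A=\begin{pmatrix}1&1\\0&0\end{pmatrix}$ on the reproducing kernel Hilbert space $\mathbb{C}^2$ and $t=1$: then $A^*A=\begin{pmatrix}1&1\\1&1\end{pmatrix}$, $AA^*=\begin{pmatrix}2&0\\0&0\end{pmatrix}$, so $\|A^*A\|_{ber}=1$ and $\|AA^*\|_{ber}=2$, and your final bound is $(1\cdot 2)^{1/4}=2^{1/4}>1=\sqrt{\|A^*A\|_{ber}}$, i.e.\ strictly weaker than what the theorem asserts. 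The appeal to Proposition \ref{basic properties}(4) does not repair this, since the right-hand side of the theorem is not symmetric under $t\leftrightarrow 1-t$. Only at $t=\tfrac12$ do the two factors coincide and your route closes.

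The paper's proof avoids this weight-mismatch by never splitting the estimate between $\lambda$ and $\mu$. It first uses convexity of $x\mapsto x^2$ to get $\left(t|\langle A\hat{k}_\lambda,\hat{k}_\mu\rangle|+(1-t)|\langle A^*\hat{k}_\lambda,\hat{k}_\mu\rangle|\right)^2\le t|\langle A\hat{k}_\lambda,\hat{k}_\mu\rangle|^2+(1-t)|\langle A^*\hat{k}_\lambda,\hat{k}_\mu\rangle|^2$, and then applies the ordinary Cauchy--Schwarz inequality in the form $|\langle A\hat{k}_\lambda,\hat{k}_\mu\rangle|\le\|A\hat{k}_\lambda\|$ and $|\langle A^*\hat{k}_\lambda,\hat{k}_\mu\rangle|\le\|A^*\hat{k}_\lambda\|$, so that both terms are expressed at the same kernel $\hat{k}_\lambda$ and sum exactly to $\langle(tA^*A+(1-t)AA^*)\hat{k}_\lambda,\hat{k}_\lambda\rangle\le\|tA^*A+(1-t)AA^*\|_{ber}$. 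No mixed Schwarz inequality and no use of \eqref{Axioms convex} is needed. If you want to salvage your approach, you would have to apply the mixed Schwarz step so that the $t$-weight stays attached to the same operator on both sides, which is precisely what discarding $\mu$ accomplishes.
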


	\begin{proof}
    The first inequality will now be demonstrated, and the second inequality directly follows from it.
		 We have 
		\begin{eqnarray*}
			 &&\sup_{\lambda, \mu \in \Omega} \left(t|\langle A \hat{k}_\lambda, \hat{k}_\mu \rangle| + (1-t) |\langle \hat{k}_\lambda, A \hat{k}_\mu \rangle| \right)^2\\
			&\leq& \sup_{\lambda, \mu \in \Omega} \left(t|\langle A \hat{k}_\lambda, \hat{k}_\mu \rangle|^2 + (1-t) |\langle \hat{k}_\lambda, A \hat{k}_\mu \rangle|^2 \right) \quad (\mbox{by convexity of $f(t) = t^2,~ t \geq 0$} )\\
				&\leq&  \sup_{\lambda, \mu \in \Omega} \left(t\|A\hat{k}_\lambda\|^2 + (1-t) \|A^*\hat{k}_\lambda\|^2 \right) \quad (\mbox{by the Cauchy-Schwarz inequality})\\
					&=& \left\| tA^*A+(1-t)AA^*\right\|_{ber},
		\end{eqnarray*}
        as desired.
	\end{proof}

	\begin{remark}
    Following Theorem \ref{th 4} and \eqref{new}, we have $$\textbf{ber}(A) \leq \min_{t \in [0,1]} \|A\|_{t-ber} \leq \min_{t \in [0,1]}\sqrt{\| tA^*A+(1-t)AA^*\|_{ber}} \leq  \sqrt{\frac{1}{2} \| A^*A+AA^*\|_{ber}} .$$ To show strict refinement one can consider $A=\begin{pmatrix}
        1&1\\
        1&1
    \end{pmatrix}$ is an operator on the two-dimensional reproducing kernel Hilbert space $\mathbb{C}^2.$ Then $$\min_{t \in [0,1]} \|A\|_{t-ber} =1<\sqrt{2}=\min_{t \in [0,1]} \sqrt{\| tA^*A+(1-t)AA^*\|_{ber} }. $$
		\end{remark}

   Recently, Bhunia et al. \cite[Theorem 5]{CAOT} provided a characterization of the unitary operators in terms of the Berezin numbers, i.e. an invertible operator $A\in \mathcal{B}(\mathcal{H})$ is unitary if and only if $\textbf{ber}(A^*A) \leq 1$ and $\textbf{ber}((A^*A)^{-1}) \leq 1$.

   This naturally leads to the question: Can we characterize the unitary operators using our newly defined $t-$Berezin norm? The answer is yes! Indeed, we get the following proposition:   
   
   \begin{prop}
   Suppose $A \in \mathcal{B}(\mathcal{H})$ is invertible. Then $A $ is unitary if and only if $\|A^*A\|_{t-ber} \leq 1$ and $\|(A^*A)^{-1}\|_{t-ber} \leq 1.$
   \end{prop}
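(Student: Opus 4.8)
The plan is to reduce the two $t$-Berezin norm conditions to the Berezin-number characterization of unitarity recalled just above the proposition, namely that an invertible $A\in\mathcal{B}(\mathcal{H})$ is unitary if and only if $\textbf{ber}(A^*A)\le 1$ and $\textbf{ber}((A^*A)^{-1})\le 1$ (see \cite[Theorem 5]{CAOT}). The only new observation needed is that on self-adjoint operators the $t$-Berezin norm, the Berezin norm and the Berezin number all coincide, so that the $t$-parameter plays no role here.

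First I would record the elementary fact that if $S\in\mathcal{B}(\mathcal{H})$ is self-adjoint, then $\|S\|_{t-ber}=\|S\|_{ber}$ for every $t\in[0,1]$. Indeed, $S^*=S$ makes the two quantities $|\langle S\hat{k}_\lambda,\hat{k}_\mu\rangle|$ and $|\langle S^*\hat{k}_\lambda,\hat{k}_\mu\rangle|$ coincide, so $t|\langle S\hat{k}_\lambda,\hat{k}_\mu\rangle|+(1-t)|\langle S^*\hat{k}_\lambda,\hat{k}_\mu\rangle|=|\langle S\hat{k}_\lambda,\hat{k}_\mu\rangle|$ for all $\lambda,\mu\in\Omega$, and taking the supremum gives the claim. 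Next, since $A$ is invertible, both $A^*A$ and $(A^*A)^{-1}$ are positive, hence self-adjoint. Applying the fact just noted and then the equality \eqref{Pintu CAOT} for positive operators, I get
\[
\|A^*A\|_{t-ber}=\|A^*A\|_{ber}=\textbf{ber}(A^*A),\qquad \|(A^*A)^{-1}\|_{t-ber}=\|(A^*A)^{-1}\|_{ber}=\textbf{ber}\big((A^*A)^{-1}\big).
\]
Thus the conditions $\|A^*A\|_{t-ber}\le 1$ and $\|(A^*A)^{-1}\|_{t-ber}\le 1$ are literally identical to $\textbf{ber}(A^*A)\le 1$ and $\textbf{ber}((A^*A)^{-1})\le 1$, and the equivalence with $A$ being unitary is then exactly \cite[Theorem 5]{CAOT}.

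I do not foresee a genuine obstacle: the statement is essentially a restatement of a known criterion, and the content is the one-line observation that the $t$-weighting collapses on self-adjoint operators because the two Berezin-type quantities in the definition of $\|\cdot\|_{t-ber}$ agree there. If one wishes to avoid invoking \cite[Theorem 5]{CAOT}, the forward implication is immediate ($A$ unitary gives $A^*A=(A^*A)^{-1}=I$, and $\|I\|_{t-ber}=1$), while for the converse one can argue directly: $\textbf{ber}(A^*A)\le 1$ and $\textbf{ber}((A^*A)^{-1})\le 1$ together with the Cauchy--Schwarz estimate $1=\langle\hat{k}_\lambda,\hat{k}_\lambda\rangle^2=\langle (A^*A)^{1/2}\hat{k}_\lambda,(A^*A)^{-1/2}\hat{k}_\lambda\rangle^2\le\langle A^*A\,\hat{k}_\lambda,\hat{k}_\lambda\rangle\,\langle (A^*A)^{-1}\hat{k}_\lambda,\hat{k}_\lambda\rangle\le 1$ force equality throughout, whence $A^*A\,\hat{k}_\lambda=\hat{k}_\lambda$ for every $\lambda\in\Omega$; since $\{k_\lambda:\lambda\in\Omega\}$ spans a dense subspace of $\mathcal{H}$, this gives $A^*A=I$, i.e., $A$ is unitary.
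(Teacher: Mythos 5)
Your proposal is correct, but it follows a different route from the paper. The paper does not pass through \cite[Theorem 5]{CAOT} at all: it only uses the one-sided inequality $\textbf{ber}(\cdot)\le\|\cdot\|_{t-ber}$ to get $\textbf{ber}(A^*A)\le 1$ and $\textbf{ber}((A^*A)^{-1})\le 1$, and then gives a direct computation, expanding $\|(A-(A^{-1})^*)\hat{k}_\lambda\|^2=\langle A^*A\hat{k}_\lambda,\hat{k}_\lambda\rangle+\langle (A^*A)^{-1}\hat{k}_\lambda,\hat{k}_\lambda\rangle-2\le 0$ to conclude $(A-(A^{-1})^*)\hat{k}_\lambda=0$ for all $\lambda$, hence $A^*=A^{-1}$ by density of the span of the kernels. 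Your main argument instead makes the sharper structural observation that $\|S\|_{t-ber}=\|S\|_{ber}$ for self-adjoint $S$, which combined with \eqref{Pintu CAOT} shows the hypothesis is \emph{literally identical} to that of the cited criterion, so the proposition is a restatement of it; this is cleaner and explains why the $t$-parameter is irrelevant here, at the cost of outsourcing the content to \cite{CAOT}. Your self-contained fallback via Cauchy--Schwarz (forcing $\langle A^*A\hat{k}_\lambda,\hat{k}_\lambda\rangle\langle(A^*A)^{-1}\hat{k}_\lambda,\hat{k}_\lambda\rangle=1$ and hence $A^*A\hat{k}_\lambda=\hat{k}_\lambda$) is essentially the same inequality as the paper's norm expansion in disguise, and both correctly rely on the kernels spanning a dense subspace --- a step the paper leaves implicit but you state explicitly. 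All steps check out.
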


   \begin{proof}
   The necessity is trivial, as follows from the definition of the $t-$Berezin norm. 
   To show sufficiency, we calculate $\| (A-{A^{-1}}^*)\hat{k}_{\lambda}\|^2.$ Using 
the inequalities $\textbf{ber}(A^*A) \leq \|A^*A\|_{t-ber}\leq 1$ and $\textbf{ber}((A^*A)^{-1}) \leq \|(A^*A)^{-1}\|_{t-ber}\leq 1$, we obtain $\| (A-{A^{-1}}^*)\hat{k}_{\lambda}\|=0$ for all $\lambda\in \Omega.$ This gives $A^*=A^{-1}.$
   \end{proof}
	
\subsection*{Inequalities for operator matrices}
	Here we study the $t-$Berezin norm of $2 \times 2$ operator matrices $\left(\begin{array}{cc}
 A&B\\
 C&D
 \end{array}\right),$ where $A, B,C$ and $D \in \mathcal{B}(\mathcal{H}).$
 First we prove the following proposition:
	
	\begin{prop} \label{operator matrix t ber}
		Let $A, B \in \mathcal{B}(\mathcal{H}). $ Then the following results hold:
		\begin{enumerate}
			\item $\left\|\left(\begin{array}{cc}
				A&O\\
				O&B
			\end{array}\right)\right\|_{t-ber} \leq \max \left\{\|A\|_{t-ber}, \|B\|_{t-ber} \right\}$
			\item  $\left\| \begin{pmatrix}
				0&A\\
				B&0
			\end{pmatrix} \right \|_{t-ber}= \left\| \begin{pmatrix}
				0&B\\
				A&0
			\end{pmatrix} \right \|_{t-ber}$
			\item  $\left\| \begin{pmatrix}
				0&A\\
				0&0
			\end{pmatrix} \right \|_{t-ber}\leq \max \{t, 1-t\} \|A\|_{ber}.$
			\end{enumerate}
	\end{prop}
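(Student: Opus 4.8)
The plan is to exploit the fact that the orthogonal direct sum $\mathcal{H}\oplus\mathcal{H}$ is itself a reproducing kernel Hilbert space, over the disjoint union $\Omega\sqcup\Omega$: evaluation at the copy of $\lambda$ in the first summand is $(f,g)\mapsto f(\lambda)=\langle (f,g),\,k_\lambda\oplus 0\rangle$, so the (unnormalized) kernel there is $k_\lambda\oplus 0$, of norm $\|k_\lambda\|$, and similarly $0\oplus k_\lambda$ in the second summand. Hence the normalized reproducing kernels of $\mathcal{H}\oplus\mathcal{H}$ are exactly the vectors $\hat{k}_\lambda\oplus 0$ and $0\oplus\hat{k}_\lambda$, $\lambda\in\Omega$. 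Granting this, evaluating $\|T\|_{t-ber}$ for a $2\times2$ operator matrix $T$ reduces to computing $t|\langle Tu,v\rangle|+(1-t)|\langle T^{*}u,v\rangle|$ only for $u,v$ each of one of these two "types" and then taking the supremum; so in each part I would first record this reduction and then run the (short) case check.

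For part (1), with $T=\left(\begin{smallmatrix}A&O\\O&B\end{smallmatrix}\right)$ and $T^{*}=\left(\begin{smallmatrix}A^{*}&O\\O&B^{*}\end{smallmatrix}\right)$: when $u,v$ are of opposite type, both $\langle Tu,v\rangle$ and $\langle T^{*}u,v\rangle$ vanish; when $u=\hat{k}_\lambda\oplus0$ and $v=\hat{k}_\mu\oplus0$ the expression is $t|\langle A\hat{k}_\lambda,\hat{k}_\mu\rangle|+(1-t)|\langle A^{*}\hat{k}_\lambda,\hat{k}_\mu\rangle|\le\|A\|_{t-ber}$; and when $u=0\oplus\hat{k}_\lambda$, $v=0\oplus\hat{k}_\mu$ it is $\le\|B\|_{t-ber}$. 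Taking the supremum gives the bound $\max\{\|A\|_{t-ber},\|B\|_{t-ber}\}$.

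For part (2), I would use the flip $U=\left(\begin{smallmatrix}O&I\\I&O\end{smallmatrix}\right)$ on $\mathcal{H}\oplus\mathcal{H}$: it is self-adjoint and unitary and carries the set of normalized reproducing kernels bijectively onto itself by interchanging the two types. From $\langle USU\,u,v\rangle=\langle S\,Uu,Uv\rangle$ and $(USU)^{*}=US^{*}U$, the substitution $u\mapsto Uu$, $v\mapsto Uv$ in the defining supremum yields $\|USU\|_{t-ber}=\|S\|_{t-ber}$ for every $S\in\mathcal{B}(\mathcal{H}\oplus\mathcal{H})$. A direct multiplication gives $U\left(\begin{smallmatrix}O&A\\B&O\end{smallmatrix}\right)U=\left(\begin{smallmatrix}O&B\\A&O\end{smallmatrix}\right)$, whence the two $t-$Berezin norms agree.

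For part (3), with $T=\left(\begin{smallmatrix}O&A\\O&O\end{smallmatrix}\right)$ and $T^{*}=\left(\begin{smallmatrix}O&O\\A^{*}&O\end{smallmatrix}\right)$, the same four-fold case check shows that for each admissible pair $(u,v)$ at most one of the two inner products is nonzero: it is $|\langle A\hat{k}_\lambda,\hat{k}_\mu\rangle|$ with weight $t$ (when $u=0\oplus\hat{k}_\lambda$, $v=\hat{k}_\mu\oplus0$), or $|\langle A^{*}\hat{k}_\lambda,\hat{k}_\mu\rangle|$ with weight $1-t$ (when $u=\hat{k}_\lambda\oplus0$, $v=0\oplus\hat{k}_\mu$), or $0$. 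Bounding each survivor by $\|A\|_{ber}=\|A^{*}\|_{ber}$ and taking the supremum gives $\max\{t,1-t\}\|A\|_{ber}$. I do not expect a genuine obstacle here: the only thing to get right is the identification of the reproducing kernels of $\mathcal{H}\oplus\mathcal{H}$ and the bookkeeping of which matrix entries contribute to each inner product; everything else is a routine supremum estimate.
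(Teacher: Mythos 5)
Your argument is internally consistent, but it rests on an identification of the reproducing kernels of $\mathcal{H}\oplus\mathcal{H}$ that is not the one this paper uses, and under the paper's convention your case reduction does not cover the supremum. You take $\mathcal{H}\oplus\mathcal{H}$ as an RKHS over the disjoint union $\Omega\sqcup\Omega$, so that the normalized kernels are exactly the ``pure type'' vectors $\hat{k}_\lambda\oplus 0$ and $0\oplus\hat{k}_\lambda$. The paper instead indexes the kernels by $\Omega\times\Omega$ and works with the jointly normalized pairs $\hat{k}_{(\lambda_1,\lambda_2)}=(k_{\lambda_1},k_{\lambda_2})/\|(k_{\lambda_1},k_{\lambda_2})\|$, every one of which has \emph{both} components nonzero. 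These two families are disjoint, so your four-fold case check (``$u,v$ each of one of two types'') tests none of the kernels over which the paper's $\|\cdot\|_{t-ber}$ for operator matrices is actually computed, and the quantity you bound is a priori a different (and possibly smaller) supremum. Concretely, in part (3) a general pair kernel makes $\langle Ak_{\lambda_2},k_{\mu_1}\rangle$ and $\langle A^*k_{\lambda_1},k_{\mu_2}\rangle$ nonzero simultaneously; the paper handles this by the triangle inequality, the estimate $t\|k_{\lambda_2}\|\|k_{\mu_1}\|+(1-t)\|k_{\lambda_1}\|\|k_{\mu_2}\|\le\max\{t,1-t\}\bigl(\|k_{\lambda_2}\|\|k_{\mu_1}\|+\|k_{\lambda_1}\|\|k_{\mu_2}\|\bigr)$, and then the Cauchy--Schwarz step $\|k_{\lambda_2}\|\|k_{\mu_1}\|+\|k_{\lambda_1}\|\|k_{\mu_2}\|\le\sqrt{\|k_{\lambda_1}\|^2+\|k_{\lambda_2}\|^2}\,\sqrt{\|k_{\mu_1}\|^2+\|k_{\mu_2}\|^2}=1$, which is precisely the step your reduction bypasses; part (1) requires the analogous expansion and the same final Cauchy--Schwarz on the component norms.

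The individual estimates you do carry out are correct on the kernels you consider, and your treatment of part (2) via the flip $U$ coincides with the paper's: the paper conjugates by the same permutation $P$, which sends the pair kernel at $(\lambda_1,\lambda_2)$ to the one at $(\lambda_2,\lambda_1)$, so that argument survives under either convention. To repair (1) and (3) you should drop the claim that the normalized kernels of $\mathcal{H}\oplus\mathcal{H}$ are only the pure-type vectors, expand $\langle T\hat{k}_{(\lambda_1,\lambda_2)},\hat{k}_{(\mu_1,\mu_2)}\rangle$ and $\langle T^*\hat{k}_{(\lambda_1,\lambda_2)},\hat{k}_{(\mu_1,\mu_2)}\rangle$ for a general jointly normalized pair, and then run the triangle-inequality and Cauchy--Schwarz estimates above. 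The missing work is short, but as written the proof establishes the inequalities for the wrong kernel set.
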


	\begin{proof}
	~~(1)~	Let $T=\begin{pmatrix}
			A&0\\
			0&B
		\end{pmatrix}.$ For any $(\lambda_1, \lambda_2),~ (\mu_1, \mu_2) \in \Omega \times \Omega,$ let $\hat{k}_{(\lambda_1, \lambda_2)} = (k_{\lambda_1}, k_{\lambda_2}),  $ $ \hat{\mu}_{(\mu_1, \mu_2)} = (k_{\mu_1}, k_{\mu_2})$ be two normalized reproducing kernels in $\mathcal{H} \oplus \mathcal{H}.$ Then 
		\begin{eqnarray*}
			&& t|\langle T\hat{k}_{(\lambda_1, \lambda_2)}, \hat{\mu}_{(\mu_1, \mu_2)}\rangle|+(1-t)|\langle T^*\hat{k}_{(\lambda_1, \lambda_2)}, \hat{\mu}_{(\mu_1, \mu_2)}\rangle|\\
			&=& t|\langle A k_{\lambda_1}, k_{\mu_1}\rangle+ \langle B k_{\lambda_2}, k_{\mu_2}\rangle|+(1-t) |\langle A^* k_{\lambda_1}, k_{\mu_1}\rangle+ \langle B^* k_{\lambda_2}, k_{\mu_2}\rangle|\\
			&\leq & t|\langle A k_{\lambda_1}, k_{\mu_1}\rangle|+  (1-t) |\langle A^* k_{\lambda_1}, k_{\mu_1}\rangle| + t|\langle B k_{\lambda_2}, k_{\mu_2}\rangle|  +(1-t) |\langle B^* k_{\lambda_2}, k_{\mu_2}\rangle|\\
			&\leq& \|A\|_{t-ber} \| k_{\lambda_1}\| \|k_{\mu_1}\|+ \|B\|_{t-ber} \|k_{\lambda_2}\| \| k_{\mu_2}\|\\
			&\leq& \max \{ \|A\|_{t-ber}, \|B\|_{t-ber}\} (\|k_{\lambda_1}\| \|k_{\mu_1}\|+ \|k_{\lambda_2}\| \| k_{\mu_2}\|)\\
			&\leq& \max \{ \|A\|_{t-ber}, \|B\|_{t-ber}\}.
		\end{eqnarray*}
		This gives the inequality (1).\\
		(2)~ The proof follows from the fact that $\left\|P^* \begin{pmatrix}
				0&A\\
				B&0
			\end{pmatrix} P \right  \|_{t-ber}= \left\| \begin{pmatrix}
				0&A\\
				B&0
			\end{pmatrix} \right \|_{t-ber},$ where $P=\begin{pmatrix}
			0 & I\\
			I & 0
		\end{pmatrix}\in \mathcal{B}(\mathcal{H}\oplus \mathcal{H}).$ \\	
		(3) Let $T=\begin{pmatrix}
			0&A\\
			0&0
		\end{pmatrix}.$ For any $(\lambda_1, \lambda_2), (\mu_1, \mu_2) \in \Omega \times \Omega,$ let $\hat{k}_{(\lambda_1, \lambda_2)} = (k_{\lambda_1}, k_{\lambda_2}), \hat{\mu}_{(\mu_1, \mu_2)} = (k_{\mu_1}, k_{\mu_2})$ be two  the normalized reproducing kernels in $\mathcal{H} \oplus \mathcal{H}.$  Then we have
	\begin{eqnarray*}
		&& t|\langle T\hat{k}_{(\lambda_1, \lambda_2)}, \hat{\mu}_{(\mu_1, \mu_2)}\rangle|+(1-t)|\langle T^*\hat{k}_{(\lambda_1, \lambda_2)}, \hat{\mu}_{(\mu_1, \mu_2)}\rangle|\\
		&=& t|\langle A k_{\lambda_2}, k_{\mu_1}\rangle+ (1-t) |\langle A^* k_{\lambda_1}, k_{\mu_2}\rangle\\
			&\leq & \|A\|_{ber} (t \|k_{\lambda_2}\| \|k_{\mu_1}\| + (1-t) \|k_{\lambda_1}\| \|k_{\mu_2}\| )\\
			&\leq& \max \{ t, 1-t \} \|A\|_{ber}  ( \|k_{\lambda_2}\| \| k_{\mu_1}\| +  \|k_{\lambda_1}\| \|k_{\mu_2}\| )\\
			&\leq& \max \{ t, 1-t \}  \|A\|_{ber}.
		\end{eqnarray*}
		This gives $\| T \|_{t-ber}\leq \max \{   t, 1-t\} \|A\|_{ber}.$
		\end{proof}
	
Building on this proposition, we establish an upper bound for $2 \times 2$ off-diagonal operator matrices.

\begin{cor}\label{op th1}
	Let $A,B \in \mathcal{B}(\mathcal{H})$. Then
\[ \left\| \begin{pmatrix}
		0 & A\\
		B& 0
	\end{pmatrix} \right\|_{t-ber} \leq \max\{  t, 1-t \} \left(\|A\|_{ber}+ \|B\|_{ber}\right).\]
\end{cor}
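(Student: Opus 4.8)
The plan is to split the off-diagonal matrix as a sum of two ``strictly upper/lower triangular'' pieces and apply the triangle inequality (item (5) of Proposition~\ref{basic properties}) together with part (3) of Proposition~\ref{operator matrix t ber}. Concretely, write
\[
\begin{pmatrix} 0 & A \\ B & 0 \end{pmatrix}
= \begin{pmatrix} 0 & A \\ 0 & 0 \end{pmatrix}
+ \begin{pmatrix} 0 & 0 \\ B & 0 \end{pmatrix}.
\]
By subadditivity of $\|\cdot\|_{t-ber}$ the left side is at most the sum of the $t$-Berezin norms of the two summands.

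The first summand is handled directly by Proposition~\ref{operator matrix t ber}(3), which gives $\left\|\begin{pmatrix} 0 & A \\ 0 & 0 \end{pmatrix}\right\|_{t-ber} \le \max\{t,1-t\}\,\|A\|_{ber}$. For the second summand, I would first observe that $\begin{pmatrix} 0 & 0 \\ B & 0 \end{pmatrix}$ is the adjoint of $\begin{pmatrix} 0 & B^* \\ 0 & 0 \end{pmatrix}$, so by Proposition~\ref{basic properties}(1) its $t$-Berezin norm equals $\left\|\begin{pmatrix} 0 & B^* \\ 0 & 0 \end{pmatrix}\right\|_{t-ber}$, which by Proposition~\ref{operator matrix t ber}(3) is at most $\max\{t,1-t\}\,\|B^*\|_{ber} = \max\{t,1-t\}\,\|B\|_{ber}$, using $\|B^*\|_{ber}=\|B\|_{ber}$. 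Alternatively one can invoke Proposition~\ref{operator matrix t ber}(2) to swap $A$ and $B$ in an off-diagonal matrix and reduce to the same bound; either route closes the estimate. Adding the two bounds yields $\max\{t,1-t\}(\|A\|_{ber}+\|B\|_{ber})$, as claimed.

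There is essentially no obstacle here: the corollary is a routine consequence of the triangle inequality and the already-proven pieces of Proposition~\ref{operator matrix t ber}. The only point requiring a moment's care is making sure the adjoint/permutation identity is applied to the correct summand (so that a $B$ appears, not a $B^*$, in the final bound), but since $\|\cdot\|_{ber}$ is adjoint-invariant this causes no trouble. One could also remark that the same argument gives a bound for the full $2\times 2$ matrix $\begin{pmatrix} A & B \\ C & D\end{pmatrix}$ by splitting into diagonal and off-diagonal parts and combining Corollary~\ref{op th1} with Proposition~\ref{operator matrix t ber}(1), but that is not needed for the statement as given.
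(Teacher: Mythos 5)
Your proposal is correct and follows essentially the same route as the paper: the same decomposition into upper- and lower-triangular pieces, the triangle inequality, and Proposition~\ref{operator matrix t ber}(3), with the lower-triangular piece reduced to an upper-triangular one (the paper uses the swap identity of Proposition~\ref{operator matrix t ber}(2), which is the alternative you also mention; your adjoint-invariance route is an equally valid one-line variant). No gaps.
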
	
	
\begin{proof}
		Using the norm property of the $t-$Berezin norm and Proposition \ref{operator matrix t ber}, we get
			\begin{eqnarray*}
			\left\|\begin{pmatrix}
			0 & A\\
			B& 0
		\end{pmatrix}\right \|_{t-ber}&\leq& \left\| \begin{pmatrix}
				0 & A\\
				0& 0
			\end{pmatrix} \right\|_{t-ber}+ \left\| \begin{pmatrix}
				0 & 0\\
				B& 0
			\end{pmatrix} \right\|_{t-ber}
			= \left\| \begin{pmatrix}
				0 & A\\
				0& 0
			\end{pmatrix} \right\|_{t-ber} +\left\| \begin{pmatrix}
				0 & B\\
				0& 0
			\end{pmatrix} \right\|_{t-ber} \\ 
			&\leq& \max \{ t,1-t\} (\|A\|_{ber}+\|B\|_{ber}). 
		\end{eqnarray*}
	\end{proof}

Next, we obtain an upper bound for general $2\times 2$ operator matrices.
	
	\begin{theorem}\label{op th4}
		Let $A, B, C, D \in \mathcal{B}(\mathcal{H}).$ Then
		$$ \left\| \begin{pmatrix}
			A & B\\
			C& D
		\end{pmatrix} \right\|_{t-ber} \leq \left\| \begin{pmatrix}
			\|A\|_{t-ber} & t\|B\|_{ber}+(1-t)\|C\|_{ber} \\
			t\|C\|_{ber}+(1-t)\|B\|_{ber}& \|D\|_{t-ber}
		\end{pmatrix} \right\| .$$
	\end{theorem}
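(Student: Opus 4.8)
The plan is to reduce the estimate on the full $2\times 2$ operator matrix to the diagonal and off-diagonal pieces already handled in Proposition \ref{operator matrix t ber} and Corollary \ref{op th1}, and then to combine the resulting numerical inequalities inside a scalar $2\times 2$ matrix norm. First I would fix normalized reproducing kernels in $\mathcal H\oplus\mathcal H$ of the form $\hat k = (k_{\lambda_1},k_{\lambda_2})$ and $\hat m = (k_{\mu_1},k_{\mu_2})$, write $T=\begin{pmatrix} A&B\\ C&D\end{pmatrix}$, and expand $t|\langle T\hat k,\hat m\rangle| + (1-t)|\langle T^*\hat k,\hat m\rangle|$. The four entry-wise terms split naturally: the $A$- and $D$-terms (with the $\langle A k_{\lambda_1},k_{\mu_1}\rangle$, $\langle A^* k_{\lambda_1},k_{\mu_1}\rangle$ pairing, etc.) are controlled by $\|A\|_{t-ber}\|k_{\lambda_1}\|\|k_{\mu_1}\|$ and $\|D\|_{t-ber}\|k_{\lambda_2}\|\|k_{\mu_2}\|$ exactly as in part (1) of the Proposition, while the $B$- and $C$-terms regroup, after the triangle inequality, so that $B$ contributes $t|\langle Bk_{\lambda_2},k_{\mu_1}\rangle| + (1-t)|\langle B^*k_{\lambda_2},k_{\mu_1}\rangle| \le \|B\|_{t-ber}\|k_{\lambda_2}\|\|k_{\mu_1}\|$; but to get the stated (generally sharper) bound with $\|\cdot\|_{ber}$ rather than $\|\cdot\|_{t-ber}$ on the off-diagonal entries, I would instead bound the $B$-contribution directly by $\|B\|_{ber}\big(t\|k_{\lambda_2}\|\|k_{\mu_1}\| + (1-t)\|k_{\lambda_1}\|\|k_{\mu_2}\|\big)$ — note the cross terms $\langle B^*k_{\lambda_1},k_{\mu_2}\rangle$ come from $T^*$, pairing the $(1,2)$-entry $B$ of $T^*$... wait, the $(2,1)$ entry of $T^*$ is $B^*$, pairing $k_{\lambda_1}$ with $k_{\mu_2}$ — exactly mirroring the computation in part (3) of Proposition \ref{operator matrix t ber}. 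Similarly $C$ contributes $\|C\|_{ber}\big(t\|k_{\lambda_1}\|\|k_{\mu_2}\| + (1-t)\|k_{\lambda_2}\|\|k_{\mu_1}\|\big)$.

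Collecting everything, with the abbreviations $a_i=\|k_{\lambda_i}\|$, $b_i=\|k_{\mu_i}\|$ (so $a_1^2+a_2^2 = 1 = b_1^2+b_2^2$ since the kernels in $\mathcal H\oplus\mathcal H$ are normalized), the bound becomes
\[
\|A\|_{t-ber}\,a_1 b_1 + \|D\|_{t-ber}\,a_2 b_2 + \big(t\|B\|_{ber}+(1-t)\|C\|_{ber}\big)a_2 b_1 + \big(t\|C\|_{ber}+(1-t)\|B\|_{ber}\big)a_1 b_2.
\]
This is precisely a quadratic form $\langle M \xi,\zeta\rangle$ with $\xi=(a_1,a_2)^{\!\top}$, $\zeta=(b_1,b_2)^{\!\top}$ unit vectors in $\mathbb R^2$ (indeed $\mathbb C^2$), and $M$ the scalar matrix on the right-hand side of the statement, all of whose entries are nonnegative. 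Hence the whole expression is $\le \sup_{\|\xi\|=\|\zeta\|=1}|\langle M\xi,\zeta\rangle| = \|M\|$, the usual operator (spectral) norm of $M$. Taking the supremum over all $\lambda_1,\lambda_2,\mu_1,\mu_2\in\Omega$ on the left gives $\left\|\begin{pmatrix}A&B\\ C&D\end{pmatrix}\right\|_{t-ber}\le \|M\|$, which is the claim.

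I would also remark that the same reasoning works verbatim if one replaces $\|B\|_{ber},\|C\|_{ber}$ by $\|B\|_{t-ber},\|C\|_{t-ber}$ using the pure triangle-inequality split; since $\|\cdot\|_{t-ber}\le\|\cdot\|_{ber}$, the version stated with $\|\cdot\|_{ber}$ on the off-diagonals is the one that actually needs the ``part (3)'' style argument, and it is the honest statement to record because it matches how $\|B\|_{ber}$ naturally appears when $B$ sits alone in an off-diagonal slot. The only mildly delicate point — and the one I would double-check carefully — is the bookkeeping of which kernel indices pair with which entry in $T$ versus $T^*$, since $T^*=\begin{pmatrix}A^*&C^*\\ B^*&D^*\end{pmatrix}$ swaps the roles of $B$ and $C$ and of the $(\lambda_1,\mu_2)$ versus $(\lambda_2,\mu_1)$ pairings; getting the coefficients $t$ and $1-t$ attached to the correct $a_ib_j$ products is exactly what produces the asymmetric off-diagonal entries $t\|B\|_{ber}+(1-t)\|C\|_{ber}$ and $t\|C\|_{ber}+(1-t)\|B\|_{ber}$ of $M$, so this step carries the whole weight of the proof. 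Everything after that is the elementary fact that a $2\times2$ matrix with nonnegative entries attains its operator norm against nonnegative unit vectors, together with monotonicity of that norm under entrywise domination.
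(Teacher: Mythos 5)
Your proposal is correct and takes essentially the same approach as the paper: expand $t|\langle T\hat k,\hat m\rangle|+(1-t)|\langle T^*\hat k,\hat m\rangle|$ against normalized kernels of $\mathcal H\oplus\mathcal H$, bound the diagonal blocks by $\|A\|_{t-ber}$, $\|D\|_{t-ber}$ and the off-diagonal blocks by $\|B\|_{ber}$, $\|C\|_{ber}$ with the $t$ and $1-t$ weights swapped by the adjoint, then recognize the resulting expression as $\langle M\xi,\zeta\rangle$ for unit vectors $\xi,\zeta$ and bound it by $\|M\|$. Your bookkeeping of which kernel indices pair with $B$ versus $B^*$ (producing the asymmetric off-diagonal entries) matches the paper's computation exactly.
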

    
	\begin{proof}
		Let $T= \begin{pmatrix}
			A & B\\
			C& D
		\end{pmatrix}.$ For any $(\lambda_1, \lambda_2),\, (\mu_1, \mu_2) \in \Omega \times \Omega,$ and let $\hat{k}_{(\lambda_1, \lambda_2)} = (k_{\lambda_1},  k_{\lambda_2}), $ $ \hat{\mu}_{(\mu_1, \mu_2)} = (k_{\mu_1}, k_{\mu_2})$ be two normalized reproducing kernels in $\mathcal{H} \oplus  \mathcal{H}.$  Then  we have
		\begin{eqnarray*}
			&&	t|\langle T\hat{k}_{(\lambda_1, \lambda_2)}, \hat{\mu}_{(\mu_1, \mu_2)}\rangle|+ (1-t) |\langle T^*\hat{k}_{(\lambda_1, \lambda_2)}, \hat{\mu}_{(\mu_1, \mu_2)}\rangle|\\
			&\leq& t|\langle A k_{\lambda_1}, k_{\mu_1} \rangle|+ (1-t) |\langle A^* k_{\lambda_1}, k_{\mu_1}\rangle|+ t|\langle D k_{\lambda_2}, k_{\mu_2}\rangle|+ (1-t) |\langle D^* k_{\lambda_2}, k_{\mu_2}\rangle|\\
			&& + (t\|B\|_{ber}+(1-t)\|C\|_{ber})\|k_{\lambda_2}\| \|k_{\mu_1}\|+ (t\|C\|_{ber}+(1-t)\|B\|_{ber})\| k_{\lambda_1}\| \|k_{\mu_2}\|\\
			&\leq& \|A\|_{t-ber} \|k_{\lambda_1}\|\|k_{\mu_1}\| + \|D\|_{t-ber} \|k_{\lambda_2}\|\|k_{\mu_2}\|\\
			&& + (t\|B\|_{ber}+(1-t)\|C\|_{ber})\|k_{\lambda_2}\| \|k_{\mu_1}\|+ (t\|C\|_{ber}+(1-t)\|B\|_{ber})\|k_{\lambda_1}\| \|k_{\mu_2}\|\\
			&=& \left \langle \begin{pmatrix}
				\|A\|_{t-ber} & t\|B\|_{ber}+(1-t)\|C\|_{ber} \\
				t\|C\|_{ber}+(1-t)\|B\|_{ber}& \|D\|_{t-ber}
			\end{pmatrix} \begin{pmatrix}
				\|k_{\lambda_1}\|\\
				\|k_{\lambda_2}\|
			\end{pmatrix}, \begin{pmatrix}
				\|k_{\mu_1}\|\\
				\|k_{\mu_2}\|
			\end{pmatrix} \right \rangle \\
			&\leq& \left\| \begin{pmatrix}
				\|A\|_{t-ber} & t\|B\|_{ber}+(1-t)\|C\|_{ber} \\
				t\|C\|_{ber}+(1-t)\|B\|_{ber}& \|D\|_{t-ber}
			\end{pmatrix} \right\|.
		\end{eqnarray*}
		Therefore, taking the supremum over all $\lambda, \mu \in \Omega,$ we obtain the desired bound.
	\end{proof}
	
	Analogous to  Theorem \ref{op th4}, we can derive an upper bound for general $n\times n$ operator matrices. 
	
	\begin{cor}
		Let $A=(A_{ij})_{i,j=1}^n$ be an $n\times n$ operator matrix, with the entries $A_{ij}\in \mathcal{B}(\mathcal{H})$. Then 
		$ \|T\|_{t-ber} \leq \| {T}'\|,$
		where ${T}'= (t_{ij})$ is an $n\times n$ matrix has the entries 
		$$t_{ij}=
		\begin{cases}
			\|T_{ij}\|_{t-ber}  &  \text{if $i=j$},\\
			t\|T_{ij}\|_{ber}+ (1-t)\|T_{ji}\|_{ber}  & \text{if $i\neq j$}.
		\end{cases} $$
	\end{cor}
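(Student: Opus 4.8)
The plan is to transcribe the proof of Theorem \ref{op th4} from two blocks to $n$ blocks. Regard $T=(T_{ij})_{i,j=1}^n$ as an operator on $\mathcal{H}^{(n)}=\mathcal{H}\oplus\cdots\oplus\mathcal{H}$ ($n$ copies), and, exactly as in Theorem \ref{op th4}, attach to tuples $(\lambda_1,\dots,\lambda_n)$ and $(\mu_1,\dots,\mu_n)$ the normalized reproducing kernels $\hat{k}_\lambda$ and $\hat{k}_\mu$ of $\mathcal{H}^{(n)}$, writing $k_{\lambda_1},\dots,k_{\lambda_n}$ and $k_{\mu_1},\dots,k_{\mu_n}$ for their components, so that $\sum_i\|k_{\lambda_i}\|^2=\sum_i\|k_{\mu_i}\|^2=1$. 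Since the $(i,j)$ block of $T$ carries the $j$-th slot into the $i$-th slot and the $(i,j)$ block of $T^*$ equals $T_{ji}^*$, one has
$$\langle T\hat{k}_\lambda,\hat{k}_\mu\rangle=\sum_{i,j=1}^n\langle T_{ij}k_{\lambda_j},k_{\mu_i}\rangle,\qquad \langle T^*\hat{k}_\lambda,\hat{k}_\mu\rangle=\sum_{i,j=1}^n\langle T_{ji}^*k_{\lambda_j},k_{\mu_i}\rangle.$$

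Next I would form $t|\langle T\hat{k}_\lambda,\hat{k}_\mu\rangle|+(1-t)|\langle T^*\hat{k}_\lambda,\hat{k}_\mu\rangle|$, apply the triangle inequality to both sums, and collect terms by position $(i,j)$. A diagonal position $i=j$ contributes $t|\langle T_{ii}k_{\lambda_i},k_{\mu_i}\rangle|+(1-t)|\langle T_{ii}^*k_{\lambda_i},k_{\mu_i}\rangle|\leq\|T_{ii}\|_{t-ber}\,\|k_{\lambda_i}\|\,\|k_{\mu_i}\|$ directly from the definition of the $t$-Berezin norm. At an off-diagonal position $(i,j)$ with $i\neq j$, the term coming from $\langle T\hat{k}_\lambda,\hat{k}_\mu\rangle$ is at most $t\|T_{ij}\|_{ber}\,\|k_{\lambda_j}\|\,\|k_{\mu_i}\|$, while the term coming from $\langle T^*\hat{k}_\lambda,\hat{k}_\mu\rangle$ is $(1-t)|\langle T_{ji}^*k_{\lambda_j},k_{\mu_i}\rangle|=(1-t)|\langle T_{ji}k_{\mu_i},k_{\lambda_j}\rangle|\leq(1-t)\|T_{ji}\|_{ber}\,\|k_{\lambda_j}\|\,\|k_{\mu_i}\|$, so together they give exactly $t_{ij}\,\|k_{\lambda_j}\|\,\|k_{\mu_i}\|$. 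Summing over all positions,
$$t|\langle T\hat{k}_\lambda,\hat{k}_\mu\rangle|+(1-t)|\langle T^*\hat{k}_\lambda,\hat{k}_\mu\rangle|\leq\sum_{i,j=1}^n t_{ij}\|k_{\lambda_j}\|\|k_{\mu_i}\|=\langle T'x_\lambda,x_\mu\rangle,$$
where $x_\lambda=(\|k_{\lambda_1}\|,\dots,\|k_{\lambda_n}\|)$ and $x_\mu=(\|k_{\mu_1}\|,\dots,\|k_{\mu_n}\|)$ are unit vectors in $\mathbb{C}^n$.

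Since $T'$ has nonnegative entries and $x_\lambda,x_\mu$ are unit vectors, the Cauchy--Schwarz inequality bounds the last quantity by $\|T'\|$, and taking the supremum over all tuples $(\lambda_1,\dots,\lambda_n),(\mu_1,\dots,\mu_n)$ yields $\|T\|_{t-ber}\leq\|T'\|$. The only delicate point is the bookkeeping for the off-diagonal terms: the position-$(i,j)$ term of $\langle T\hat{k}_\lambda,\hat{k}_\mu\rangle$ must be paired with the position-$(i,j)$ term of $\langle T^*\hat{k}_\lambda,\hat{k}_\mu\rangle$, which involves the block $T_{ji}$ rather than $T_{ij}$, so that the coefficient of $\|k_{\lambda_j}\|\|k_{\mu_i}\|$ assembles into $t\|T_{ij}\|_{ber}+(1-t)\|T_{ji}\|_{ber}$; everything else is a routine $n$-fold repetition of the $2\times 2$ computation in Theorem \ref{op th4}.
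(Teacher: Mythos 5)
Your proposal is correct and is exactly the argument the paper intends: the corollary is stated without a separate proof, only with the remark that it is ``analogous to Theorem \ref{op th4},'' and your $n$-block transcription---including the pairing of the $(i,j)$ term of $\langle T\hat{k}_\lambda,\hat{k}_\mu\rangle$ with the $T_{ji}$-term of $\langle T^*\hat{k}_\lambda,\hat{k}_\mu\rangle$ to assemble $t\|T_{ij}\|_{ber}+(1-t)\|T_{ji}\|_{ber}$, and the final estimate $\langle T'x_\lambda,x_\mu\rangle\leq\|T'\|$ for unit vectors $x_\lambda,x_\mu$---is precisely the $2\times 2$ computation of that theorem repeated $n$-fold.
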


			\section{Berezin number inequalities via Orlicz functions}
            
          In this section, we present upper bounds for the Berezin number of bounded linear operators utilizing Orlicz functions. Recall that, an Orlicz function \(\phi:[0, \infty) \rightarrow [0, \infty )\) is a function that satisfies the following conditions: 
          (i) $\phi$ is continuous, convex and increasing; 
          (ii) $\phi(0) = 0$; 
          (iii) $\phi(t) \rightarrow +\infty$ as $t \rightarrow +\infty.$ Moreover, an Orlicz function is said to be sub-multiplicative if for all $x, y \geq 0,$ $\phi(xy) \leq \phi(x) \phi (y)$ holds.

         In the first theorem of this section, we will improve (and generalize) the inequality \eqref{Taghavi}.
	
		\begin{theorem} \label{th 6}
		Let $A \in \mathcal{B}(\mathcal{H}).$ Let $g, h$ be two nonnegative continuous functions on $[0, \infty) $ satisfying $g(t) h(t)=t$ for all $t\geq 0$. Then,  for a well-defined function $f:(0,1) \rightarrow [0,\infty)$ and a sub-multiplicative Orlicz function $\phi$,
		\begin{eqnarray*}
			\phi\left(	\textbf{ber}^2(A)  \right)  &\leq& \frac{f(t)}{2(1+ f(t))}  \left(\frac12  \textbf{ber} \left(\phi\left(g^4(|A|)\right) + \phi\left(h^4(|A^*|)\right)\right) +  \phi\left(\textbf{ber}\left( h^2(|A^*|) g^2(|A|)\right)\right) \right) \\&&+ \frac{1}{2(1+ f(t))} \phi\left(\textbf{ber}(A)\right) ber\left(\phi \left(g^2(|A|)\right) + \phi\left(h^2(|A^*|)\right) \right).
		\end{eqnarray*}
	\end{theorem}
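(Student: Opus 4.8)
The plan is to establish the two estimates
\[
\phi\!\left(\textbf{ber}^2(A)\right)\le\tfrac12 X\qquad\text{and}\qquad\phi\!\left(\textbf{ber}^2(A)\right)\le\tfrac12 Y,
\]
where $X=\tfrac12\,\textbf{ber}\!\left(\phi\!\left(g^4(|A|)\right)+\phi\!\left(h^4(|A^*|)\right)\right)+\phi\!\left(\textbf{ber}\!\left(h^2(|A^*|)g^2(|A|)\right)\right)$ and $Y=\phi\!\left(\textbf{ber}(A)\right)\,\textbf{ber}\!\left(\phi\!\left(g^2(|A|)\right)+\phi\!\left(h^2(|A^*|)\right)\right)$, and then to combine them through the trivial splitting $\phi(\textbf{ber}^2(A))=\tfrac{f(t)}{1+f(t)}\,\phi(\textbf{ber}^2(A))+\tfrac{1}{1+f(t)}\,\phi(\textbf{ber}^2(A))$. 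Since $\tfrac{f(t)}{1+f(t)}\cdot\tfrac12=\tfrac{f(t)}{2(1+f(t))}$ and $\tfrac{1}{1+f(t)}\cdot\tfrac12=\tfrac{1}{2(1+f(t))}$, this reproduces exactly the coefficients in the statement, and the function $f$ enters only here, used merely through $f(t)\ge0$. The common first step is Lemma~\ref{mixed schwarz} applied with $B=I$ (hence $r(B)=1$), $\psi=g$, $\eta=h$ and $x=y=\hat{k}_\lambda$, which gives $|\widetilde{A}(\lambda)|=|\langle A\hat{k}_\lambda,\hat{k}_\lambda\rangle|\le\big\|g(|A|)\hat{k}_\lambda\big\|\big\|h(|A^*|)\hat{k}_\lambda\big\|$ for all $\lambda\in\Omega$.

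For the bound by $\tfrac12 X$, I would square the above and write the product of the two nonnegative reals $\langle g^2(|A|)\hat{k}_\lambda,\hat{k}_\lambda\rangle$ and $\langle\hat{k}_\lambda,h^2(|A^*|)\hat{k}_\lambda\rangle$ in the form to which the Buzano inequality applies (with unit vector $\hat{k}_\lambda$), obtaining
\[
|\widetilde{A}(\lambda)|^2\le\tfrac12\Big(\big\|g^2(|A|)\hat{k}_\lambda\big\|\big\|h^2(|A^*|)\hat{k}_\lambda\big\|+\big|\langle h^2(|A^*|)g^2(|A|)\hat{k}_\lambda,\hat{k}_\lambda\rangle\big|\Big),
\]
where the cross-term is recognized as $\langle h^2(|A^*|)g^2(|A|)\hat{k}_\lambda,\hat{k}_\lambda\rangle$ using self-adjointness of $h^2(|A^*|)$. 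Then $\big\|g^2(|A|)\hat{k}_\lambda\big\|=\langle g^4(|A|)\hat{k}_\lambda,\hat{k}_\lambda\rangle^{1/2}$ (and similarly for $h$), and AM--GM bounds the first summand by $\tfrac12\langle\big(g^4(|A|)+h^4(|A^*|)\big)\hat{k}_\lambda,\hat{k}_\lambda\rangle$. Now I apply $\phi$ and use, in turn: convexity of $\phi$ (with the $\tfrac12,\tfrac12$ weights) to pass $\phi$ through the two averages; the scalar Jensen inequality $\phi\!\left(\langle P\hat{k}_\lambda,\hat{k}_\lambda\rangle\right)\le\langle\phi(P)\hat{k}_\lambda,\hat{k}_\lambda\rangle$ for a positive operator $P$ and the unit vector $\hat{k}_\lambda$ (via its spectral measure); positivity of $\phi(g^4(|A|))+\phi(h^4(|A^*|))$ together with the definition of $\textbf{ber}(\cdot)$; and monotonicity of $\phi$ on the cross-term. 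This yields $\phi(|\widetilde{A}(\lambda)|^2)\le\tfrac12 X$.

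For the bound by $\tfrac12 Y$, I would instead start from $|\widetilde{A}(\lambda)|^2\le\textbf{ber}(A)\,|\widetilde{A}(\lambda)|$, apply $\phi$ and submultiplicativity to factor out $\phi(\textbf{ber}(A))$, and estimate $|\widetilde{A}(\lambda)|\le\big\|g(|A|)\hat{k}_\lambda\big\|\big\|h(|A^*|)\hat{k}_\lambda\big\|\le\tfrac12\langle\big(g^2(|A|)+h^2(|A^*|)\big)\hat{k}_\lambda,\hat{k}_\lambda\rangle$ by AM--GM. Applying $\phi$ and again using convexity of $\phi$, the scalar Jensen inequality, positivity and the definition of $\textbf{ber}(\cdot)$ gives $\phi(|\widetilde{A}(\lambda)|)\le\tfrac12\,\textbf{ber}\!\left(\phi(g^2(|A|))+\phi(h^2(|A^*|))\right)$, hence $\phi(|\widetilde{A}(\lambda)|^2)\le\tfrac12 Y$. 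Taking the supremum over $\lambda\in\Omega$ in both bounds, and using continuity and monotonicity of $\phi$ so that $\sup_\lambda\phi(|\widetilde{A}(\lambda)|^2)=\phi\!\left(\sup_\lambda|\widetilde{A}(\lambda)|^2\right)=\phi(\textbf{ber}^2(A))$, gives $\phi(\textbf{ber}^2(A))\le\tfrac12 X$ and $\phi(\textbf{ber}^2(A))\le\tfrac12 Y$; the convex-combination step then yields the asserted inequality.

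There is no deep obstacle here; the care lies in the bookkeeping---arranging the Buzano inequality so that the ``diagonal'' term reproduces $|\widetilde{A}(\lambda)|^2$ while the ``off-diagonal'' term is exactly $\langle h^2(|A^*|)g^2(|A|)\hat{k}_\lambda,\hat{k}_\lambda\rangle$, and tracking which steps use convexity of $\phi$, which use submultiplicativity, and which use the one-vector Jensen inequality; in particular, no operator convexity of $\phi$ is needed. One should also note that the displayed inequality holds for \emph{every} nonnegative function $f$, since $f$ only governs the split of the constant $\tfrac12$ between the two independently proven bounds.
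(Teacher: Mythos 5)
Your proof is correct and follows essentially the same route as the paper's: the same initial convex splitting with weights $\tfrac{f(t)}{1+f(t)}$ and $\tfrac{1}{1+f(t)}$, the mixed Schwarz inequality (Lemma~\ref{mixed schwarz}) with $B=I$, Buzano's inequality on one branch, sub-multiplicativity plus AM--GM on the other, and the scalar Jensen inequality $\phi(\langle P\hat{k}_\lambda,\hat{k}_\lambda\rangle)\le\langle\phi(P)\hat{k}_\lambda,\hat{k}_\lambda\rangle$ throughout. The only (cosmetic) difference is that you prove the two bounds separately and combine them at the level of $\phi(\textbf{ber}^2(A))$, whereas the paper combines them pointwise in $\lambda$ before taking the supremum; your remark that $f$ merely interpolates between two independently valid estimates is accurate.
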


    Before proceeding with the proof of this theorem, we derive the following corollaries.

	\begin{cor} \label{th 6 cor1}
    Let $A \in \mathcal{B}(\mathcal{H})$ and $r \geq 1,$ then
		\begin{eqnarray*}
			\textbf{ber}^{2r}(A) &\leq& \frac18 \left\||A|^{2r} + |A^*|^{2r} \right\|_{ber} + \frac14 \textbf{ber}^r\left(|A^*||A|\right) + \frac14 \textbf{ber}^r(A) \left\||A|^r + |A^*|^r \right\|_{ber}.
		\end{eqnarray*}
        \end{cor}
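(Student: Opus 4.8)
The plan is to obtain Corollary~\ref{th 6 cor1} as a direct specialization of Theorem~\ref{th 6}. First I would fix $r\ge 1$ and take $\phi(s)=s^{r}$ on $[0,\infty)$: this function is continuous, convex (here convexity is exactly where $r\ge 1$ is used), increasing, vanishes at $0$, tends to $+\infty$, and satisfies $\phi(xy)=(xy)^{r}=\phi(x)\phi(y)$, so it is a sub-multiplicative Orlicz function and is admissible in Theorem~\ref{th 6}. Next I would choose the pair of nonnegative continuous functions $g(s)=h(s)=\sqrt{s}$, which clearly satisfies $g(s)h(s)=s$ for all $s\ge 0$; then by the functional calculus $g^{2}(|A|)=|A|$, $g^{4}(|A|)=|A|^{2}$, $h^{2}(|A^{*}|)=|A^{*}|$, $h^{4}(|A^{*}|)=|A^{*}|^{2}$, and $h^{2}(|A^{*}|)\,g^{2}(|A|)=|A^{*}||A|$.

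With these substitutions the quantities on the right-hand side of Theorem~\ref{th 6} become $\phi\!\left(g^{4}(|A|)\right)+\phi\!\left(h^{4}(|A^{*}|)\right)=|A|^{2r}+|A^{*}|^{2r}$, $\phi\!\left(\textbf{ber}\!\left(h^{2}(|A^{*}|)g^{2}(|A|)\right)\right)=\textbf{ber}^{r}\!\left(|A^{*}||A|\right)$, $\phi\!\left(g^{2}(|A|)\right)+\phi\!\left(h^{2}(|A^{*}|)\right)=|A|^{r}+|A^{*}|^{r}$, and $\phi(\textbf{ber}(A))=\textbf{ber}^{r}(A)$, while the left-hand side becomes $\phi(\textbf{ber}^{2}(A))=\textbf{ber}^{2r}(A)$. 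Since $|A|^{2r}+|A^{*}|^{2r}$ and $|A|^{r}+|A^{*}|^{r}$ are positive operators, equation~\eqref{Pintu CAOT} allows me to replace $\textbf{ber}\!\left(|A|^{2r}+|A^{*}|^{2r}\right)$ by $\big\||A|^{2r}+|A^{*}|^{2r}\big\|_{ber}$ and $\textbf{ber}\!\left(|A|^{r}+|A^{*}|^{r}\right)$ by $\big\||A|^{r}+|A^{*}|^{r}\big\|_{ber}$.

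Finally I would choose $t$ so that $f(t)=1$ (which is possible for the specific function $f$ arising in the proof of Theorem~\ref{th 6}; concretely $t=\tfrac12$), so that the coefficients $\frac{f(t)}{2(1+f(t))}\cdot\frac12$, $\frac{f(t)}{2(1+f(t))}$ and $\frac{1}{2(1+f(t))}$ collapse to $\frac18$, $\frac14$ and $\frac14$, respectively. Plugging all of this into the inequality of Theorem~\ref{th 6} produces exactly the asserted bound. There is no genuine obstacle here: the only points that need care are verifying the admissibility of $\phi(s)=s^{r}$ (in particular convexity, which forces $r\ge 1$) and keeping track of which operators are positive so that \eqref{Pintu CAOT} may legitimately be invoked to pass from Berezin numbers to Berezin norms.
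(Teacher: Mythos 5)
Your proposal is correct and follows exactly the paper's route: the paper proves Corollary~\ref{th 6 cor1} by specializing Theorem~\ref{th 6} with $\phi(s)=s^{r}$, $g(s)=h(s)=\sqrt{s}$, and $f\equiv 1$, then invoking \eqref{Pintu CAOT} for the positive operators $|A|^{2r}+|A^*|^{2r}$ and $|A|^{r}+|A^*|^{r}$, just as you do. The only cosmetic remark is that $f$ in Theorem~\ref{th 6} is a free parameter, so one simply takes the constant function $f\equiv 1$ rather than solving $f(t)=1$ for a particular $t$.
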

\begin{proof}
This follows from Theorem \ref{th 6} by taking $\phi(t) =t^r$, $g(t) = h(t)= \sqrt{t}$ and $f(t)=1.$ 
\end{proof}

	\begin{cor}\label{th 6 cor2}
		Let $A \in \mathcal{B}(\mathcal{H}),$ then \begin{eqnarray*}
			\textbf{ber}^2(A) &\leq& \frac{1}{12} \left\||A|^2 + |A^*|^2\right\|_{ber} + \frac{1}{6} \textbf{ber}\left(|A^*||A|\right) + \frac{1}{3} \textbf{ber}(A) \left\||A| + |A^*|\right\|_{ber}.
		\end{eqnarray*}
	\end{cor}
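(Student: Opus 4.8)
The plan is to obtain Corollary~\ref{th 6 cor2} as a direct specialization of Theorem~\ref{th 6}, in the same spirit as Corollary~\ref{th 6 cor1} but with a different choice of the free parameter $f$. Recall that Corollary~\ref{th 6 cor1} came from the choices $\phi(t)=t^r$, $g(t)=h(t)=\sqrt{t}$, $f(t)=1$; here I would keep $g$ and $h$ but change $\phi$ and $f$.

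First I would take $\phi$ to be the identity function $\phi(t)=t$ on $[0,\infty)$; it is continuous, convex, increasing, vanishes at $0$, tends to $+\infty$, and trivially satisfies $\phi(xy)=\phi(x)\phi(y)$, so it is an admissible sub-multiplicative Orlicz function. Next, take $g(t)=h(t)=\sqrt{t}$, so that $g(t)h(t)=t$; this turns $g^4(|A|)$ into $|A|^2$, $h^4(|A^*|)$ into $|A^*|^2$, $g^2(|A|)$ into $|A|$, $h^2(|A^*|)$ into $|A^*|$, and $h^2(|A^*|)g^2(|A|)$ into $|A^*||A|$. Finally, take the constant value $f(t)=\tfrac12$, an admissible choice of the well-defined function $f:(0,1)\to[0,\infty)$; then $\dfrac{f(t)}{2(1+f(t))}=\dfrac16$ and $\dfrac{1}{2(1+f(t))}=\dfrac13$.

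Substituting these choices into the conclusion of Theorem~\ref{th 6}, and using that $\phi$ is the identity, gives
$$\textbf{ber}^2(A) \leq \frac16\left(\frac12\,\textbf{ber}\bigl(|A|^2 + |A^*|^2\bigr) + \textbf{ber}\bigl(|A^*||A|\bigr)\right) + \frac13\,\textbf{ber}(A)\,\textbf{ber}\bigl(|A| + |A^*|\bigr).$$
Since $|A|^2 + |A^*|^2$ and $|A| + |A^*|$ are positive operators, I would then invoke \eqref{Pintu CAOT} to replace $\textbf{ber}$ by $\|\cdot\|_{ber}$ for these two terms (the middle operator $|A^*||A|$ need not be positive, so it is left as a Berezin number); collecting the constants $\tfrac16\cdot\tfrac12=\tfrac1{12}$, $\tfrac16$, and $\tfrac13$ yields exactly the asserted inequality.

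There is essentially no genuine obstacle here: the entire content has already been absorbed into Theorem~\ref{th 6}, and what remains is bookkeeping. The only points requiring a moment's care are (i) checking that the identity map qualifies as a sub-multiplicative Orlicz function so that Theorem~\ref{th 6} applies, and (ii) keeping track of which operators are positive, so that the replacement of $\textbf{ber}$ by $\|\cdot\|_{ber}$ via \eqref{Pintu CAOT} is performed only for $|A|^2 + |A^*|^2$ and $|A| + |A^*|$.
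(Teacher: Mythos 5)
Your proposal is correct and coincides with the paper's own proof, which likewise obtains the corollary from Theorem~\ref{th 6} by setting $\phi(t)=t$, $g(t)=h(t)=\sqrt{t}$, and $f(t)=\tfrac12$. Your explicit remark that \eqref{Pintu CAOT} is needed to convert $\textbf{ber}$ of the positive operators $|A|^2+|A^*|^2$ and $|A|+|A^*|$ into the corresponding Berezin norms is a detail the paper leaves implicit, and it is handled correctly.
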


    \begin{proof}
        This follows from Theorem \ref{th 6} by setting $\phi(t)= t$, $g(t) = h(t) = \sqrt{t}$ and $ f(t) = \frac{1}{2}.$ 
    \end{proof}

\begin{remark}
(1) Using the inequality \eqref{Basaran} (below) and the equality \eqref{Pintu CAOT}, we conclude that Corollary \ref{th 6 cor1} improves the bound \eqref{Taghavi} for $r \geq 2.$

\noindent (2) The inequality in Corollary \ref{th 6 cor2} significantly improves \cite[Theorem 3]{Axioms}, namely,  \[\textbf{ber}^2(A) \leq \frac16 \left\||A|^2 + |A^*|^2 \right\|_{ber} + \frac13 \textbf{ber}(A) \left\||A| + |A^*|\right\|_{ber}.\] 
\end{remark}

We now prove Theorem \ref{th 6}. To do this, first, we need the following lemma, which is known as Buzano's inequality (an extension of the Cauchy-Schwarz inequality).
    
\begin{lemma}\label{buzano}\cite{Buzano}
		Let $x, y, e \in \mathcal{H}$ with $\|e\|=1.$ Then  $|\langle x, e\rangle \langle e , y\rangle| \leq \frac{1}{2}\left(\|x\|\|y\| + |\langle x, y \rangle| \right) .$
	\end{lemma}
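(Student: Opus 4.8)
The plan is to recognize the left-hand side as a single inner product built from the rank-one orthogonal projection onto $e$, and then to exploit that this projection differs from $\tfrac12 I$ by a self-adjoint unitary. Since $\|e\|=1$, I would define $P\in\mathcal{B}(\mathcal{H})$ by $Pz=\langle z,e\rangle e$. A direct check shows $P^2=P$ and $P^*=P$, so $P$ is the orthogonal projection onto $\mathrm{span}\{e\}$. Because $Px=\langle x,e\rangle e$, we have $\langle Px,y\rangle=\langle x,e\rangle\langle e,y\rangle$, so the quantity to be estimated is precisely $|\langle Px,y\rangle|$.

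The crucial observation is that $U:=2P-I$ is a self-adjoint unitary: indeed $U^*=U$ and $U^2=4P^2-4P+I=I$, whence $\|Uz\|=\|z\|$ for all $z\in\mathcal{H}$. Writing $P=\tfrac12(I+U)$ gives
\[
\langle x,e\rangle\langle e,y\rangle=\langle Px,y\rangle=\tfrac12\langle x,y\rangle+\tfrac12\langle Ux,y\rangle.
\]
Now I would apply the triangle inequality, followed by the Cauchy--Schwarz inequality and the identity $\|Ux\|=\|x\|$ on the second summand, to obtain
\[
|\langle x,e\rangle\langle e,y\rangle|\le\tfrac12|\langle x,y\rangle|+\tfrac12\|Ux\|\,\|y\|=\tfrac12\big(\|x\|\,\|y\|+|\langle x,y\rangle|\big),
\]
which is exactly the claimed bound.

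The only substantive step is identifying $2P-I$ as a unitary, after which the estimate collapses to a single application of Cauchy--Schwarz; everything else is routine verification. A more elementary alternative avoids operator language entirely: one normalizes by a unimodular phase on $x$ so that $\langle x,e\rangle\langle e,y\rangle\ge 0$ (this preserves $\|x\|$, $\|y\|$, and $|\langle x,y\rangle|$), decomposes $y=\langle y,e\rangle e+y_0$ with $y_0\perp e$, and estimates the resulting terms by Cauchy--Schwarz. However, carrying that route out so as to recover the sharp constant $\tfrac12$ is more delicate, so I would present the projection argument above.
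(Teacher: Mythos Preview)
Your proof is correct. The paper does not actually prove this lemma; it merely states it with a citation to \cite{Buzano} and then invokes it as a tool in the proofs of Theorems~\ref{th 6} and~\ref{th 7}. So there is nothing to compare against on the paper's side.

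Your argument via the self-adjoint unitary $U=2P-I$ (the reflection through $\mathrm{span}\{e\}$) is clean and standard: once $P=\tfrac12(I+U)$ is in hand, the estimate is immediate from the triangle inequality and a single use of Cauchy--Schwarz together with $\|Ux\|=\|x\|$. All verifications ($P^2=P$, $P^*=P$, $U^2=I$, $\langle Px,y\rangle=\langle x,e\rangle\langle e,y\rangle$) are routine and correctly carried out. The alternative you sketch at the end is indeed more delicate to push through with the sharp constant, so presenting the projection argument is the right call.
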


\begin{proof}[Proof of Theorem \ref{th 6}]
		Let $\hat{k}_\lambda$ be a normalized reproducing kernel of $\mathcal{H}.$	Employing the convexity property of $\phi,$ we obtain
		\begin{eqnarray*}
			&& \phi \left(\left|\langle A\hat{k}_\lambda, \hat{k}_\lambda \rangle\right|^2  \right)\\ 
			&\leq& \frac{f(t)}{1+f(t)} \phi \left( \left|\langle A\hat{k}_\lambda, \hat{k}_\lambda \rangle\right|^2 \right) + \frac{1}{1+f(t)}\phi \left( \left|\langle A\hat{k}_\lambda, \hat{k}_\lambda \rangle\right|^2 \right)\\
			&\leq& \frac{f(t)}{1+ f(t)} \phi \left( \left\langle g^2(|A|)\hat{k}_\lambda, \hat{k}_\lambda \right \rangle \left \langle h^2(|A^*|) \hat{k}_\lambda, \hat{k}_\lambda \right \rangle\right) \\ &&+ \frac{1}{1+ f(t)} \phi \left(|\langle A\hat{k}_\lambda, \hat{k}_\lambda \rangle| \sqrt{\left\langle g^2(|A|)\hat{k}_\lambda, \hat{k}_\lambda \right \rangle \left \langle h^2(|A^*|) \hat{k}_\lambda, \hat{k}_\lambda  \right \rangle} \right) \, (\mbox{using Lemma \ref{mixed schwarz}})\\
			&\leq& \frac{f(t)}{1+ f(t)} \phi \left(\frac{\left\|g^2(|A|)\hat{k}_\lambda \right\| \left\| h^2(|A^*|)\hat{k}_\lambda \right\| + \left|\left\langle g^2(|A|)\hat{k}_\lambda, h^2(|A^*|)\hat{k}_\lambda \right\rangle\right|}{2} \right) \\ && + \frac{1}{1+ f(t)} \phi \left(|\langle A\hat{k}_\lambda, \hat{k}_\lambda \rangle| \right) \phi \left(\frac{ \left\langle g^2(|A|)\hat{k}_\lambda, \hat{k}_\lambda \right \rangle + \left \langle h^2(|A^*|) \hat{k}_\lambda, \hat{k}_\lambda \right \rangle}{2} \right)\\
			&&\,\,\,\,\,\,\,\,\,\,\,\,\,\,\,\,\,\,\,\,\,\,\,\,\,~~(\mbox{using Lemma \ref{buzano} and sub-multiplicative property of $\phi$})\\
			&\leq& \frac{f(t)}{2(1+ f(t))} \phi \left(\left\|g^2(|A|)\hat{k}_\lambda \right\| \left\| h^2(|A^*|)\hat{k}_\lambda \right\| \right) + \frac{f(t)}{2(1+ f(t))} \phi \left(\left|\left \langle h^2(|A^*|) g^2 (|A|) \hat{k}_\lambda, \hat{k}_\lambda \right \rangle\right| \right)\\ && + \frac{1}{2(1+ f(t))} \phi \left(\left|\langle A\hat{k}_\lambda, \hat{k}_\lambda \rangle\right| \right) \left \langle \left(\phi \left(g^2(|A|)\right) + \phi \left(h^2(|A^*|)\right)\right)\hat{k}_\lambda, \hat{k}_\lambda \right\rangle \\
			&\leq& \frac{f(t)}{2(1+ f(t))} \left(\phi \left(\left\langle \left( \frac{g^4(|A|) + h^4(|A^*|)}{2} \right)\hat{k}_\lambda, \hat{k}_\lambda \right\rangle \right) +  \phi \left(\left|\left \langle h^2(|A^*|) g^2 (|A|) \hat{k}_\lambda, \hat{k}_\lambda \right \rangle\right| \right)\right)\\ &&  + \frac{1}{2(1+ f(t))} \phi \left(\left|\langle A\hat{k}_\lambda, \hat{k}_\lambda \rangle\right| \right) \left \langle \left( \phi \left(g^2(|A|)\right) + \phi \left(h^2(|A^*|)\right)\right)\hat{k}_\lambda, \hat{k}_\lambda \right\rangle \\
			&\leq& \frac{f(t)}{4(1+ f(t))} \left(\left\langle \left(\phi\left(g^4(|A|)\right) + \phi\left(h^4(|A^*|) \right)\right) \hat{k}_\lambda, \hat{k}_\lambda  \right\rangle  + 2 \phi \left(\left \langle h^2(|A^*|) g^2 (|A|) \hat{k}_\lambda, \hat{k}_\lambda \right \rangle \right)\right)\\ &&  + \frac{1}{2(1+ f(t))} \phi \left(|\langle A\hat{k}_\lambda, \hat{k}_\lambda \rangle| \right) \left \langle \left( \phi \left(g^2(|A|)\right) + \phi \left(h^2(|A^*|)\right)\right)\hat{k}_\lambda, \hat{k}_\lambda \right\rangle \\
			&\leq& \frac{f(t)}{4(1+ f(t))}\textbf{ber} \left(\phi\left(g^4(|A|)\right) + \phi\left(h^4(|A^*|)\right)\right) + \frac{f(t)}{2(1+ f(t))} \phi\left(\textbf{ber}\left( h^2(|A^*|) g^2(|A|)\right)\right) \\&&+ \frac{1}{2(1+ f(t))} \phi\left(\textbf{ber}(A)\right) \textbf{ber}\left(\phi \left(g^2(|A|)\right) + \phi\left(h^2(|A^*|)\right) \right).
		\end{eqnarray*}
        Therefore, we get the desired inequality by taking the supremum over all $\lambda \in \Omega$. 
	\end{proof}

Our next result improves (and generalizes) the following inequality (see in \cite{BMA}):
    \begin{equation} \label{Basaran}
       \textbf{ber}^r(B^*A) \leq \frac12 \left\| |A|^{2r} + |B|^{2r} \right\|_{ber}, \quad r\geq 1.
\end{equation}

\begin{theorem} \label{th 7}
		Let $A, B \in \mathcal{B}(\mathcal{H}).$ Let $f: (0,1)\rightarrow [0,\infty)$ be a well-defined function. Then for any sub-multiplicative Orlicz function $\phi$, 
		\begin{eqnarray*}
			\phi\left(\textbf{ber}^2(A^*B)\right) &\leq& \frac{1}{2(1+ f(t))} \phi\left(\textbf{ber}(A^*B)\right) \textbf{ber}\left( \phi(|A|^2) + \phi(|B|^2) \right) \\ && + \frac{f(t)}{2(1+f(t))} \left(  \phi\left(\textbf{ber}(|B|^2 |A|^2)\right) + \frac{1}{2} \textbf{ber} \left(\phi(|A|^4) + \phi(|B|^4) \right) \right).
		\end{eqnarray*}
	\end{theorem}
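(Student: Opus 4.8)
The plan is to transcribe the proof of Theorem~\ref{th 6} almost verbatim, the only structural change being that the mixed Schwarz inequality used there (Lemma~\ref{mixed schwarz} with $B=I$) is replaced by the ordinary Cauchy--Schwarz inequality applied to the splitting $\langle A^*B\hat{k}_\lambda,\hat{k}_\lambda\rangle=\langle B\hat{k}_\lambda,A\hat{k}_\lambda\rangle$; in particular the auxiliary functions $g,h$ drop out and the pair $(|B|^2,|A|^2)$ takes over the role that $\bigl(g^2(|A|),h^2(|A^*|)\bigr)$ played there. Fix a normalized reproducing kernel $\hat{k}_\lambda$ of $\mathcal{H}$. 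By Cauchy--Schwarz,
$$|\langle A^*B\hat{k}_\lambda,\hat{k}_\lambda\rangle|\le\|B\hat{k}_\lambda\|\,\|A\hat{k}_\lambda\|=\langle|B|^2\hat{k}_\lambda,\hat{k}_\lambda\rangle^{1/2}\,\langle|A|^2\hat{k}_\lambda,\hat{k}_\lambda\rangle^{1/2},$$
so that $|\langle A^*B\hat{k}_\lambda,\hat{k}_\lambda\rangle|^2\le\langle|B|^2\hat{k}_\lambda,\hat{k}_\lambda\rangle\,\langle|A|^2\hat{k}_\lambda,\hat{k}_\lambda\rangle$, and also $|\langle A^*B\hat{k}_\lambda,\hat{k}_\lambda\rangle|^2\le|\langle A^*B\hat{k}_\lambda,\hat{k}_\lambda\rangle|\,\bigl(\langle|B|^2\hat{k}_\lambda,\hat{k}_\lambda\rangle\,\langle|A|^2\hat{k}_\lambda,\hat{k}_\lambda\rangle\bigr)^{1/2}$.

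Using the convexity of $\phi$, split
$$\phi\bigl(|\langle A^*B\hat{k}_\lambda,\hat{k}_\lambda\rangle|^2\bigr)\le\frac{f(t)}{1+f(t)}\,\phi\bigl(|\langle A^*B\hat{k}_\lambda,\hat{k}_\lambda\rangle|^2\bigr)+\frac{1}{1+f(t)}\,\phi\bigl(|\langle A^*B\hat{k}_\lambda,\hat{k}_\lambda\rangle|^2\bigr),$$
and feed the first estimate above into the $\frac{f(t)}{1+f(t)}$-summand and the second into the $\frac{1}{1+f(t)}$-summand. On the $\frac{f(t)}{1+f(t)}$-summand I would invoke Buzano's inequality (Lemma~\ref{buzano}) with $x=|B|^2\hat{k}_\lambda$, $y=|A|^2\hat{k}_\lambda$, $e=\hat{k}_\lambda$ --- noting that $\langle x,e\rangle$ and $\langle e,y\rangle$ are nonnegative reals, so their product equals $\langle|B|^2\hat{k}_\lambda,\hat{k}_\lambda\rangle\langle|A|^2\hat{k}_\lambda,\hat{k}_\lambda\rangle$ --- to get the bound $\tfrac12\bigl(\||B|^2\hat{k}_\lambda\|\,\||A|^2\hat{k}_\lambda\|+|\langle|A|^2|B|^2\hat{k}_\lambda,\hat{k}_\lambda\rangle|\bigr)$. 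Then convexity of $\phi$ splits $\phi$ of this average into two pieces, and the AM--GM bound $\||B|^2\hat{k}_\lambda\|\,\||A|^2\hat{k}_\lambda\|=\langle|B|^4\hat{k}_\lambda,\hat{k}_\lambda\rangle^{1/2}\langle|A|^4\hat{k}_\lambda,\hat{k}_\lambda\rangle^{1/2}\le\tfrac12\bigl(\langle|B|^4\hat{k}_\lambda,\hat{k}_\lambda\rangle+\langle|A|^4\hat{k}_\lambda,\hat{k}_\lambda\rangle\bigr)$ together with the scalar Jensen step $\phi\bigl(\tfrac{a+b}{2}\bigr)\le\tfrac12(\phi(a)+\phi(b))$ and the operator Jensen inequality $\phi(\langle X\hat{k}_\lambda,\hat{k}_\lambda\rangle)\le\langle\phi(X)\hat{k}_\lambda,\hat{k}_\lambda\rangle$ (valid for positive $X$ since $\phi$ is convex and $\phi(0)=0$) reduce this summand to $\frac{f(t)}{4(1+f(t))}\langle(\phi(|B|^4)+\phi(|A|^4))\hat{k}_\lambda,\hat{k}_\lambda\rangle+\frac{f(t)}{2(1+f(t))}\phi\bigl(|\langle|A|^2|B|^2\hat{k}_\lambda,\hat{k}_\lambda\rangle|\bigr)$. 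On the $\frac{1}{1+f(t)}$-summand I would peel off the factor $\phi(|\langle A^*B\hat{k}_\lambda,\hat{k}_\lambda\rangle|)$ by sub-multiplicativity of $\phi$, dominate the remaining $\phi$ of the geometric mean $\bigl(\langle|B|^2\hat{k}_\lambda,\hat{k}_\lambda\rangle\langle|A|^2\hat{k}_\lambda,\hat{k}_\lambda\rangle\bigr)^{1/2}$ by $\phi$ of the arithmetic mean $\tfrac12(\langle|B|^2\hat{k}_\lambda,\hat{k}_\lambda\rangle+\langle|A|^2\hat{k}_\lambda,\hat{k}_\lambda\rangle)$ (monotonicity of $\phi$), and again apply the scalar Jensen step and operator Jensen to reach $\frac{1}{2(1+f(t))}\phi(|\langle A^*B\hat{k}_\lambda,\hat{k}_\lambda\rangle|)\,\langle(\phi(|B|^2)+\phi(|A|^2))\hat{k}_\lambda,\hat{k}_\lambda\rangle$.

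Finally, take the supremum over $\lambda\in\Omega$: since $\phi$ is increasing and continuous, $\sup_\lambda\phi(|\langle|A|^2|B|^2\hat{k}_\lambda,\hat{k}_\lambda\rangle|)=\phi(\textbf{ber}(|A|^2|B|^2))$ and likewise for the outer $\phi$ around $\textbf{ber}(A^*B)$, while the averages $\langle(\phi(|B|^4)+\phi(|A|^4))\hat{k}_\lambda,\hat{k}_\lambda\rangle$ and $\langle(\phi(|B|^2)+\phi(|A|^2))\hat{k}_\lambda,\hat{k}_\lambda\rangle$ supremise to $\textbf{ber}(\phi(|A|^4)+\phi(|B|^4))$ and $\textbf{ber}(\phi(|A|^2)+\phi(|B|^2))$ (these operators being positive). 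Using $\textbf{ber}(|A|^2|B|^2)=\textbf{ber}(|B|^2|A|^2)$ (which holds because $|\langle X\hat{k}_\lambda,\hat{k}_\lambda\rangle|=|\langle X^*\hat{k}_\lambda,\hat{k}_\lambda\rangle|$) and writing $\frac{f(t)}{4(1+f(t))}=\frac{f(t)}{2(1+f(t))}\cdot\frac12$, the three collected terms are precisely the claimed right-hand side. I do not foresee a genuine obstacle: the argument is a faithful copy of the proof of Theorem~\ref{th 6} with $A$ replaced by $A^*B$, and the only points requiring attention are checking that the scalars entering Buzano's inequality are genuinely nonnegative (so that the modulus on its left-hand side equals the product) and the routine bookkeeping that makes the numerical constants match.
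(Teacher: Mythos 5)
Your proposal is correct and follows essentially the same route as the paper: the paper's proof of Theorem~\ref{th 7} opens by citing Lemma~\ref{gen cauchy} (the refined Cauchy--Schwarz inequality), which is exactly the convex splitting you re-derive inline from the two elementary bounds $|\langle B\hat{k}_\lambda,A\hat{k}_\lambda\rangle|^2\le\|A\hat{k}_\lambda\|^2\|B\hat{k}_\lambda\|^2$ and $|\langle B\hat{k}_\lambda,A\hat{k}_\lambda\rangle|^2\le|\langle A^*B\hat{k}_\lambda,\hat{k}_\lambda\rangle|\,\|A\hat{k}_\lambda\|\|B\hat{k}_\lambda\|$. After that first step the two arguments coincide term for term (sub-multiplicativity plus AM--GM on the $\frac{1}{1+f(t)}$-piece, Buzano plus AM--GM plus operator Jensen on the $\frac{f(t)}{1+f(t)}$-piece, then the supremum), so no further comment is needed.
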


    Before we prove this, we deduce the following corollaries.
	
	\begin{cor} \label{th 7 cor 1}
		Let $A, B \in \mathcal{B}(\mathcal{H})$ and $\alpha \geq 0.$ Then for $r \geq 1,$ 
        \begin{eqnarray*}
			\textbf{ber}^{2r}(A^*B) &\leq& \frac{1}{2(1+ \alpha)} \textbf{ber}^r (A^*B)  \left\||A|^{2r} + |B|^{2r}\right\|_{ber} +  \frac{\alpha}{4(1+ \alpha)} \left\||A|^{4r} + |B|^{4r}\right\|_{ber} \\ && + \frac{\alpha}{2(1+ \alpha)}  \textbf{ber}^r\left( |B|^{2} |A|^{2}\right)\\
            &\leq& \frac{1}{2( 1+\alpha )} \textbf{ber}^r (A^*B)  \left\||A|^{2r} + |B|^{2r}\right\|_{ber} + \frac{\alpha}{2(1+\alpha)} \left\||A|^{4r} + |B|^{4r}\right\|_{ber}.
		\end{eqnarray*}	     
	\end{cor}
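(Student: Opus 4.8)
The plan is to deduce Corollary \ref{th 7 cor 1} directly from Theorem \ref{th 7} by a suitable specialization of the free parameters, exactly in the spirit of how Corollaries \ref{th 6 cor1} and \ref{th 6 cor2} were obtained from Theorem \ref{th 6}. Concretely, I would take the sub-multiplicative Orlicz function $\phi(t) = t^r$ for a fixed $r \geq 1$ (this is indeed sub-multiplicative since $(xy)^r = x^r y^r$, continuous, convex and increasing on $[0,\infty)$ with $\phi(0)=0$ and $\phi(t)\to\infty$), and I would set the constant value of $f$ to be $f(t) = \alpha$ for the given $\alpha \geq 0$. I should note a small caveat: the hypothesis of Theorem \ref{th 7} asks for $f:(0,1)\to[0,\infty)$, so I would either restrict $t$ to $(0,1)$ or simply observe that the argument of Theorem \ref{th 7} only uses the \emph{value} $f(t)$ as a nonnegative real number, hence any nonnegative constant is admissible; this matches how $f(t)=1$ and $f(t)=\tfrac12$ were used before.

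With these substitutions, the left-hand side $\phi(\textbf{ber}^2(A^*B))$ becomes $\textbf{ber}^{2r}(A^*B)$, and term by term on the right: $\phi(\textbf{ber}(A^*B)) = \textbf{ber}^r(A^*B)$; $\textbf{ber}(\phi(|A|^2)+\phi(|B|^2)) = \textbf{ber}(|A|^{2r}+|B|^{2r})$; $\phi(\textbf{ber}(|B|^2|A|^2)) = \textbf{ber}^r(|B|^2|A|^2)$; and $\textbf{ber}(\phi(|A|^4)+\phi(|B|^4)) = \textbf{ber}(|A|^{4r}+|B|^{4r})$. The only remaining gap between what Theorem \ref{th 7} literally yields and the stated first inequality of the corollary is the replacement of $\textbf{ber}(|A|^{2r}+|B|^{2r})$ and $\textbf{ber}(|A|^{4r}+|B|^{4r})$ by the corresponding Berezin norms $\||A|^{2r}+|B|^{2r}\|_{ber}$ and $\||A|^{4r}+|B|^{4r}\|_{ber}$. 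This is handled by the positivity identity \eqref{Pintu CAOT}: since $|A|^{2r}+|B|^{2r}$ and $|A|^{4r}+|B|^{4r}$ are positive operators, $\textbf{ber}(\cdot) = \|\cdot\|_{ber}$ for each of them. After substituting $\frac{f(t)}{2(1+f(t))}\cdot\frac12 = \frac{\alpha}{4(1+\alpha)}$ and $\frac{f(t)}{2(1+f(t))} = \frac{\alpha}{2(1+\alpha)}$, the first displayed inequality of the corollary follows verbatim.

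For the second (coarser) inequality, the plan is to bound the ``cross'' term $\textbf{ber}^r(|B|^2|A|^2)$ in terms of the Berezin norms already present. I would use the mixed Schwarz-type estimate: for any normalized reproducing kernel $\hat{k}_\lambda$, $|\langle |B|^2|A|^2 \hat{k}_\lambda, \hat{k}_\lambda\rangle| \le \||A|^2\hat{k}_\lambda\|\,\||B|^2\hat{k}_\lambda\| = \langle |A|^4\hat{k}_\lambda,\hat{k}_\lambda\rangle^{1/2}\langle |B|^4\hat{k}_\lambda,\hat{k}_\lambda\rangle^{1/2} \le \tfrac12\big(\langle |A|^4\hat{k}_\lambda,\hat{k}_\lambda\rangle + \langle |B|^4\hat{k}_\lambda,\hat{k}_\lambda\rangle\big)$ by AM--GM, so $\textbf{ber}(|B|^2|A|^2) \le \tfrac12\textbf{ber}(|A|^4+|B|^4) = \tfrac12\||A|^4+|B|^4\|_{ber}$, and more generally $\textbf{ber}^r(|B|^2|A|^2) \le \tfrac1{2^r}\||A|^4+|B|^4\|_{ber}^r$; then combined with $\||A|^4+|B|^4\|_{ber}^r \le \||A|^{4r}+|B|^{4r}\|_{ber}$ (which is \eqref{Axioms convex} applied with the positive operators $|A|^4$, $|B|^4$, $t=\tfrac12$ and exponent $r\ge 1$), one gets $\textbf{ber}^r(|B|^2|A|^2) \le \tfrac{1}{2^r}\||A|^{4r}+|B|^{4r}\|_{ber} \le \tfrac12\||A|^{4r}+|B|^{4r}\|_{ber}$. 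Feeding this into the first inequality and collecting the two coefficients $\frac{\alpha}{4(1+\alpha)} + \frac{\alpha}{2(1+\alpha)}\cdot\frac12 = \frac{\alpha}{2(1+\alpha)}$ yields the second inequality. The only point requiring genuine care (the ``hard part'', such as it is) is the bookkeeping of which terms are already positive operators so that \eqref{Pintu CAOT} applies cleanly, together with making sure the passage from $\textbf{ber}^r(|B|^2|A|^2)$ to the $|A|^{4r}+|B|^{4r}$ term uses the operator convexity inequality \eqref{Axioms convex} in the correct direction; everything else is direct substitution into Theorem \ref{th 7}.
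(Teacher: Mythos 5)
Your proposal is correct and, for the first inequality, coincides with the paper's proof: substitute $\phi(t)=t^r$ and the constant $f(t)=\alpha$ into Theorem \ref{th 7} (the positivity identity \eqref{Pintu CAOT} you invoke to pass from $\textbf{ber}$ to $\|\cdot\|_{ber}$ on the positive terms is fine, though the one-sided inequality $\textbf{ber}(\cdot)\le\|\cdot\|_{ber}$ would already suffice). For the second inequality the paper simply applies the quoted bound \eqref{Basaran} with $A\to|A|^2$, $B\to|B|^2$, which gives $\textbf{ber}^r(|B|^2|A|^2)\le\frac12\||A|^{4r}+|B|^{4r}\|_{ber}$ in one line; you instead re-derive exactly this estimate from scratch via Cauchy--Schwarz, AM--GM and \eqref{Axioms convex}, which is a legitimate (and self-contained) alternative since it is essentially the proof of \eqref{Basaran} in this special case. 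One bookkeeping slip: your intermediate claim $\||A|^4+|B|^4\|_{ber}^r\le\||A|^{4r}+|B|^{4r}\|_{ber}$ is not what \eqref{Axioms convex} gives --- with $t=\tfrac12$ it yields $\frac{1}{2^r}\||A|^4+|B|^4\|_{ber}^r\le\frac12\||A|^{4r}+|B|^{4r}\|_{ber}$, i.e.\ $\||A|^4+|B|^4\|_{ber}^r\le 2^{r-1}\||A|^{4r}+|B|^{4r}\|_{ber}$ --- so the stated consequence $\textbf{ber}^r(|B|^2|A|^2)\le\frac{1}{2^r}\||A|^{4r}+|B|^{4r}\|_{ber}$ is actually false in general (test $A=B=I$). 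However, the weaker bound $\textbf{ber}^r(|B|^2|A|^2)\le\frac12\||A|^{4r}+|B|^{4r}\|_{ber}$, which is all you use in the final coefficient count $\frac{\alpha}{4(1+\alpha)}+\frac{\alpha}{2(1+\alpha)}\cdot\frac12=\frac{\alpha}{2(1+\alpha)}$, does follow from the correct application, so the argument survives once that constant is repaired.
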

\begin{proof}
    Considering the Orlicz function $\phi(t) = t^r, t \geq 0,$ and $f(t) = \alpha,$ in Theorem \ref{th 7}, we get the first inequality. The second inequality follows via the inequality \eqref{Basaran}.
\end{proof}

	\begin{cor} \label{th 7 cor 2}
		Let $A, B \in \mathcal{B}(\mathcal{H}).$ Then
		\begin{eqnarray*}
			\textbf{ber}^{2}(A^*B) 
			&\leq&  \frac{1}{3} \left\||A|^{2} + |B|^{2}\right\|_{ber}\textbf{ber}(A^*B) + \frac{1}{12} \left\||A|^{4} + |B|^{4}\right\|_{ber} + \frac16 \textbf{ber}(|B|^2|A|^2).\\
            &\leq& \frac16 \left\| |A|^4 + |B|^4 \right\|_{ber} + \frac13 \textbf{ber} (A^*B) \left\| |A|^2 + |B|^2 \right\|_{ber}.
		\end{eqnarray*}
     \end{cor}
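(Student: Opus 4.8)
The plan is to obtain Corollary~\ref{th 7 cor 2} as the specialization of Theorem~\ref{th 7} to the simplest nontrivial choice of the free data, exactly in the spirit of the proof of Corollary~\ref{th 7 cor 1}. Concretely, I would take the sub-multiplicative Orlicz function $\phi(t)=t$ (so $\phi(t^2)$ becomes $t^2$, $\phi(|A|^2)=|A|^2$, etc.) and the weight function $f(t)=\tfrac12$, i.e. set $\alpha=\tfrac12$ in the statement of Corollary~\ref{th 7 cor 1} with $r=1$. With $\alpha=\tfrac12$ one has $\tfrac{1}{2(1+\alpha)}=\tfrac13$, $\tfrac{\alpha}{4(1+\alpha)}=\tfrac{1}{12}$, and $\tfrac{\alpha}{2(1+\alpha)}=\tfrac16$, which reproduces the coefficients in the first displayed inequality of the corollary. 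This is the entire content of the first inequality, so no genuinely new estimate is needed — it is a substitution.

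For the second inequality I would bound the two ``mixed'' terms in the first line by cleaner quantities. First, I would invoke the inequality \eqref{Basaran} with $r=1$, which gives $\textbf{ber}(|B|^2|A|^2)=\textbf{ber}\big((|A|)^*(|B|^2|A|)\big)$-type control; more directly, applying \eqref{Basaran} (or the mixed Schwarz inequality of Lemma~\ref{mixed schwarz}) yields $\textbf{ber}(|B|^2|A|^2)\le \tfrac12\big\||A|^4+|B|^4\big\|_{ber}$. Substituting this into $\tfrac{1}{12}\big\||A|^4+|B|^4\big\|_{ber}+\tfrac16\textbf{ber}(|B|^2|A|^2)$ produces $\tfrac{1}{12}\big\||A|^4+|B|^4\big\|_{ber}+\tfrac1{12}\big\||A|^4+|B|^4\big\|_{ber}=\tfrac16\big\||A|^4+|B|^4\big\|_{ber}$, which combines with the surviving first term $\tfrac13\big\||A|^2+|B|^2\big\|_{ber}\textbf{ber}(A^*B)$ to give precisely the claimed second inequality.

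The only step that requires a moment's care is verifying that the hypotheses of Theorem~\ref{th 7} are met by the choices $\phi(t)=t$ and $f(t)=\tfrac12$: one checks that $\phi(t)=t$ is indeed an Orlicz function (continuous, convex, increasing, $\phi(0)=0$, $\phi(t)\to\infty$) and is sub-multiplicative since $\phi(xy)=xy=\phi(x)\phi(y)$, and that $f(t)\equiv\tfrac12$ is a well-defined function on $(0,1)$ with values in $[0,\infty)$. After that, the proof is purely arithmetic: plug in, simplify the coefficients, and apply \eqref{Basaran} once to pass from the first inequality to the second. I do not anticipate any real obstacle; the main thing to get right is the bookkeeping of the constants $\tfrac13,\tfrac1{12},\tfrac16$ and confirming the telescoping $\tfrac1{12}+\tfrac1{12}=\tfrac16$ in the final step.

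\begin{proof}
Apply Theorem~\ref{th 7} with the sub-multiplicative Orlicz function $\phi(t)=t$ (for which $\phi(xy)=\phi(x)\phi(y)$ holds trivially) and the well-defined function $f(t)=\tfrac12$ on $(0,1)$. Since $\tfrac{1}{2(1+1/2)}=\tfrac13$, $\tfrac{f(t)}{2(1+f(t))}=\tfrac16$ and $\tfrac12\cdot\tfrac16=\tfrac1{12}$, the inequality in Theorem~\ref{th 7} becomes
\[
\textbf{ber}^{2}(A^*B)\leq \frac13\left\||A|^2+|B|^2\right\|_{ber}\textbf{ber}(A^*B)+\frac{1}{12}\left\||A|^4+|B|^4\right\|_{ber}+\frac16\,\textbf{ber}\big(|B|^2|A|^2\big),
\]
which is the first asserted inequality. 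For the second, note that by the inequality \eqref{Basaran} (with $r=1$, applied to the operators $|A|$ and $|B|^2|A|$, equivalently directly as $\textbf{ber}(|B|^2|A|^2)\le \tfrac12\||A|^4+|B|^4\|_{ber}$) we have $\tfrac16\textbf{ber}(|B|^2|A|^2)\le \tfrac1{12}\||A|^4+|B|^4\|_{ber}$, and hence
\[
\textbf{ber}^{2}(A^*B)\leq \frac13\left\||A|^2+|B|^2\right\|_{ber}\textbf{ber}(A^*B)+\frac{1}{12}\left\||A|^4+|B|^4\right\|_{ber}+\frac{1}{12}\left\||A|^4+|B|^4\right\|_{ber},
\]
which simplifies to $\textbf{ber}^{2}(A^*B)\leq \tfrac16\||A|^4+|B|^4\|_{ber}+\tfrac13\textbf{ber}(A^*B)\||A|^2+|B|^2\|_{ber}$, as claimed.
\end{proof}
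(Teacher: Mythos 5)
Your proof is correct and follows essentially the same route as the paper: specialize Theorem~\ref{th 7} to $\phi(t)=t$ and $f(t)=\tfrac12$ to get the stated coefficients $\tfrac13,\tfrac1{12},\tfrac16$, then absorb the term $\tfrac16\,\textbf{ber}(|B|^2|A|^2)$ into $\tfrac1{12}\||A|^4+|B|^4\|_{ber}$ via \eqref{Basaran}. One minor slip: your parenthetical suggestion to apply \eqref{Basaran} to ``$|A|$ and $|B|^2|A|$'' would control $\textbf{ber}(|A|\,|B|^2|A|)$ rather than $\textbf{ber}(|B|^2|A|^2)$, but the direct substitution $A\mapsto|A|^2$, $B\mapsto|B|^2$ that you also give is the correct one and carries the argument.
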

\begin{proof}
     The first inequality is derived from Theorem \ref{th 7} by selecting the Orlicz function $\phi(t) = t$ for $t \geq 0$ and setting $f(t) = \frac{1}{2}$. For the second inequality, we utilize \eqref{Pintu CAOT} and \eqref{Basaran}.
\end{proof}

   \begin{remark}  
   (1) By setting the Orlicz function $\phi(t) = t^r, r\geq 1,$ and $f(t) = \frac{t}{1-t}$ in Theorem \ref{th 7}, we derive the following inequality: for $A, B \in \mathcal{B}(\mathcal{H}),$ $0 \leq \alpha\leq 1$ and $r \geq 1,$ 
\begin{eqnarray*}
\textbf{ber}^{2r}(A^*B) &\leq& \frac{1-\alpha}{2}\textbf{ber}^r (A^*B)  \left\||A|^{2r} + |B|^{2r}\right\|_{ber} +  \frac{\alpha}{2} \left\||A|^{4r} + |B|^{4r}\right\|_{ber},
\end{eqnarray*}
which was also studied in  \cite[Theorem 3.1]{MJM}.

\noindent (2) Using \eqref{Basaran} and \eqref{Axioms convex},  we can show that the inequality in Corollary \ref{th 7 cor 1} provides a sharper bound compared to that of Gao et al. in \cite[Theorem 3.9]{DCDS}, namely,  \[\textbf{ber}^r(A^*B) \leq \frac{1}{2 \lambda + 2} \left\||A|^r + |B|^r \right\|_{ber} \textbf{ber}^{\frac{r}{2}}(A^*B) + \frac{\lambda}{2 \lambda +2} \left\||A|^{2r} + |B|^{2r} \right\|_{ber}, \] for any $\lambda \geq 0$ and $r \geq 2.$ Also, Corollary \ref{th 7 cor 1} improves inequality \eqref{Basaran}.

    \noindent (3) Also, Corollary \ref{th 7 cor 2} extends and refines the result established by Altwaijry et al. in \cite[Theorem 4]{Axioms}, namely, \[\textbf{ber}^2(A^*B) \leq \frac16 \left\||A|^4 + |B|^4\right\|_{ber} + \frac13 \textbf{ber}(A^*B) \left\| |A|^2 + |B|^2 \right\|_{ber}. \]
       \end{remark}

We now prove Theorem \ref{th 7}. For this we need the following lemma, which is a refinement of the Cauchy-Schwarz inequality.
	
	\begin{lemma}\cite[Lemma 2.7]{Nayak C-S}\label{gen cauchy}
		Let $f: (0,1) \rightarrow [0,\infty)$ be a well-defined function. Then,  \[|\langle x, y \rangle|^2 \leq \frac{f(t)}{1+f(t)} \|x\|^2 \|y\|^2  + \frac{1}{1+ f(t)} |\langle x, y \rangle | \|x\|\|y\| \quad \text{for any $x, y \in \mathcal{H}$}.\] 
	\end{lemma}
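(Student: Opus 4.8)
The plan is to reduce the stated inequality to the ordinary Cauchy--Schwarz inequality via a simple convexity observation. First I would introduce the abbreviations $a = |\langle x, y\rangle|$ and $b = \|x\|\,\|y\|$, so that the asserted bound reads
$$a^2 \leq \frac{f(t)}{1+f(t)}\, b^2 + \frac{1}{1+f(t)}\, ab.$$
Since $f(t) \geq 0$ for the (fixed) parameter $t$, the two coefficients $\frac{f(t)}{1+f(t)}$ and $\frac{1}{1+f(t)}$ are both nonnegative and sum to $1$; hence the right-hand side is nothing but a convex combination of $b^2$ and $ab$. Recognizing this partition of unity is the conceptual step that makes the whole estimate transparent.

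The only analytic input is the Cauchy--Schwarz inequality itself, namely $a \leq b$. Multiplying this by the nonnegative quantities $a$ and $b$ respectively, I obtain the two elementary bounds $a^2 \leq ab$ and $a^2 \leq b^2$. Both terms on the right-hand side therefore dominate $a^2$, so any convex combination of them does as well. Concretely, I would write $a^2 = \frac{f(t)}{1+f(t)}\,a^2 + \frac{1}{1+f(t)}\,a^2$ and then bound the first copy of $a^2$ by $b^2$ and the second by $ab$; this produces exactly the claimed inequality.

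The argument is uniform in $t$ because $t$ enters only through the single nonnegative scalar $f(t)$, so no case analysis on $t$ is required. The only mild point to verify is the degenerate case $b = 0$ (equivalently $x=0$ or $y=0$), where both sides vanish and the inequality holds trivially. Since everything rests on the two one-line consequences $a^2 \le ab$ and $a^2 \le b^2$ of Cauchy--Schwarz, I do not expect any serious obstacle; the substantive content is simply the observation that the prefactors form a convex combination, which is precisely what legitimizes dominating each term by its Cauchy--Schwarz majorant.
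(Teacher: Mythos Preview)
Your argument is correct: writing $a=|\langle x,y\rangle|$ and $b=\|x\|\,\|y\|$, Cauchy--Schwarz gives $a\le b$, hence $a^2\le ab$ and $a^2\le b^2$, and the right-hand side is the convex combination $\frac{f(t)}{1+f(t)}b^2+\frac{1}{1+f(t)}ab$ of two quantities each dominating $a^2$. The paper does not supply its own proof of this lemma---it is quoted directly from \cite[Lemma~2.7]{Nayak C-S}---so there is no in-paper argument to compare against; your short convexity reduction is a perfectly adequate self-contained justification.
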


	\begin{proof}[Proof of  Theorem \ref{th 7}]
		Let $\hat{k}_\lambda$ be a normalized reproducing kernel of $ \mathcal{H}.$  By the convexity of $\phi$, we get
        \begin{eqnarray*}
			&& \phi\left(	\left|\langle A^*B\hat{k}_\lambda, \hat{k}_\lambda \rangle \right|^2 \right)\\ 
			&\leq& \phi \left( \frac{1}{(1+ f(t))}  \|A\hat{k}_\lambda \|\|B\hat{k}_\lambda\| |\langle A\hat{k}_\lambda, B\hat{k}_\lambda \rangle| +  \frac{f(t)}{(1+ f(t))} \|A\hat{k}_\lambda\|^2\|B\hat{k}_\lambda\|^2 \right)\, (\mbox{by Lemma \ref{gen cauchy}})\\
			&\leq& \frac{1}{(1+ f(t))} \phi\left( \|A\hat{k}_\lambda\| \|B\hat{k}_\lambda\||\langle A^*B\hat{k}_\lambda, \hat{k}_\lambda \rangle|\right) + \frac{f(t)}{(1+ f(t))} \phi\left(\|A\hat{k}_\lambda\|^{2} \|B\hat{k}_\lambda\|^{2} \right)\\
			&\leq& \frac{1}{(1+ f(t))} \phi \left(\left\langle \left(\frac{|A|^{2}+ |B|^{2}}{2} \right)\hat{k}_\lambda, \hat{k}_\lambda \right\rangle \right) \phi \left(\left|\langle A^*B\hat{k}_\lambda, \hat{k}_\lambda \rangle \right| \right)\\&& + \frac{f(t)}{(1+ f(t))} \phi\left(\langle |A|^{2}\hat{k}_\lambda, \hat{k}_\lambda \rangle \langle \hat{k}_\lambda, |B|^{2}\hat{k}_\lambda \rangle \right) \\&&\,\,\,\,\,\,\,\,\,\,\,\,\,\,\,\,\,\, \,\,\,\,\,\, \,\,\,\,\,\,\,\,\,\,\,\,\,\,\,\,\,\,\,\,\,\,\,\, \,\,\,\,\,\,\,\,\,\,\,\,\,\,\,\,\,\, \,\,\,\,\,\, \,\,\,\,\,\,\,\,\,\,\,\,\,\,\,\,\,\,(\mbox{by AM-GM inequality and sub-multiplicative of $\phi$})\\ 
			&\leq& \frac{1}{2(1+ f(t))}  \left\langle \left(\phi\left(|A|^{2}\right)+ \phi\left(|B|^{2}\right) \right)\hat{k}_\lambda, \hat{k}_\lambda \right\rangle \phi\left( \left|\langle A^*B\hat{k}_\lambda, \hat{k}_\lambda \rangle \right|\right) \\&&+ \frac{f(t)}{(1+ f(t))} \phi \left( \frac{\left\||A|^{2}\hat{k}_\lambda \right\|\left\||B|^{2}\hat{k}_\lambda \right\| + \left|\left\langle |A|^{2}\hat{k}_\lambda, |B|^{2}\hat{k}_\lambda \right\rangle\right| }{2} \right) \, (\mbox{using Lemma \ref{buzano}})\\
			&\leq&\frac{1}{2(1+ f(t))}  \left\langle \left(\phi\left(|A|^{2}\right)+ \phi\left(|B|^{2}\right) \right)\hat{k}_\lambda, \hat{k}_\lambda \right\rangle \phi\left( \left|\langle A^*B\hat{k}_\lambda, \hat{k}_\lambda \rangle \right|\right)  \\&&+ \frac{f(t)}{2(1+ f(t))}\left( \phi \left( \left\langle \left(\frac{|A|^{4} + |B|^{4} }{2} \right)\hat{k}_\lambda, \hat{k}_\lambda \right\rangle \right)+  \phi \left(\left|\left\langle \left(|B|^{2} |A|^{2}\right)\hat{k}_\lambda, \hat{k}_\lambda \right\rangle\right|\right)\right)\\
			&\leq&	\frac{1}{2(1+ f(t))}  \left\langle \left(\phi\left(|A|^{2}\right)+ \phi\left(|B|^{2}\right) \right)\hat{k}_\lambda, \hat{k}_\lambda \right\rangle \phi\left( \left|\langle A^*B\hat{k}_\lambda, \hat{k}_\lambda \rangle \right|\right)  \\&&+ \frac{f(t)}{4(1+ f(t))}  \left\langle \left(\phi\left(|A|^{4}\right) + \phi \left(|B|^{4}\right)  \right)\hat{k}_\lambda, \hat{k}_\lambda \right\rangle + \frac{f(t)}{2(1+f(t))} \phi \left(\left|\left\langle \left(|B|^{2} |A|^{2}\right)\hat{k}_\lambda, \hat{k}_\lambda \right\rangle\right|\right)\\
			&\leq& \frac{1}{2(1+ f(t))} \phi\left(\textbf{ber}(A^*B)\right) \textbf{ber}\left( \phi(|A|^2) + \phi(|B|^2) \right) \\ && + \frac{f(t)}{2(1+f(t))} \phi\left(\textbf{ber}(|B|^2 |A|^2)\right) + \frac{f(t)}{4(1+f(t))} \textbf{ber} \left(\phi(|A|^4) + \phi(|B|^4) \right).
		\end{eqnarray*}
		Therefore, the desired inequality follows by taking the supremum over all $\lambda \in \Omega$.
	\end{proof}

	Our next theorem is a generalization of $\textbf{ber}^{2}(A) \leq \frac12 \left\| |A|^{2} + |A^*|^{2} \right\|_{ber}$ (proved by Basaran et al. in \cite[Th. 3.2]{BMA})  through the Orlicz function.
	
		\begin{theorem}\label{th 8}
		Let $A \in \mathcal{B}(\mathcal{H}).$ Let $g,h$ be non-negative continuous functions on $[0, \infty)$ satisfying $g(t) h(t)=t,$ $\forall$ $t\geq 0.$ Then for any Orlicz function $\phi$ and for any $\alpha \in [0,1]$,
		\[\phi\left(\textbf{ber}^2(A) \right) \leq \textbf{ ber}\left(\frac{\alpha}{2}\left( \phi \left(g^4(|A|)\right) + \phi \left(h^4(|A^*|) \right)\right) + (1-\alpha) \phi \left(|A|^2\right)\right) \]
		and 
		\[\phi\left(\textbf{ber}^2(A) \right) \leq  \textbf{ber}\left(\frac{\alpha}{2}\left( \phi \left(g^4(|A^*|)\right) + \phi \left(h^4(|A|) \right)\right) + (1-\alpha) \phi \left(|A^*|^2\right)\right) .\]
	\end{theorem}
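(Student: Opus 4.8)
\emph{Proof plan for Theorem \ref{th 8}.} The strategy mirrors the proof of Theorem \ref{th 6}: fix a normalized reproducing kernel $\hat{k}_\lambda$, bound $\phi\bigl(|\langle A\hat{k}_\lambda,\hat{k}_\lambda\rangle|^2\bigr)$ by a vector-state value of the target operator, and then take the supremum over $\lambda\in\Omega$. First I would split
$|\langle A\hat{k}_\lambda,\hat{k}_\lambda\rangle|^2=\alpha\,|\langle A\hat{k}_\lambda,\hat{k}_\lambda\rangle|^2+(1-\alpha)\,|\langle A\hat{k}_\lambda,\hat{k}_\lambda\rangle|^2$
and estimate the two pieces separately. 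On the $\alpha$-piece I apply Lemma \ref{mixed schwarz} with $B=I$ (so $r(B)=1$ and the hypothesis $|A|B=B^*|A|$ is trivial) and $\psi=g$, $\eta=h$, which gives $|\langle A\hat{k}_\lambda,\hat{k}_\lambda\rangle|^2\le\langle g^2(|A|)\hat{k}_\lambda,\hat{k}_\lambda\rangle\,\langle h^2(|A^*|)\hat{k}_\lambda,\hat{k}_\lambda\rangle$; then AM--GM for scalars followed by the elementary estimate $\langle T\hat{k}_\lambda,\hat{k}_\lambda\rangle^2\le\langle T^2\hat{k}_\lambda,\hat{k}_\lambda\rangle$ (valid for a positive $T$ and a unit vector, since $\langle T\hat{k}_\lambda,\hat{k}_\lambda\rangle\le\|T\hat{k}_\lambda\|$) upgrades this to $\tfrac12\bigl(\langle g^4(|A|)\hat{k}_\lambda,\hat{k}_\lambda\rangle+\langle h^4(|A^*|)\hat{k}_\lambda,\hat{k}_\lambda\rangle\bigr)$. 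On the $(1-\alpha)$-piece the plain Cauchy--Schwarz inequality gives $|\langle A\hat{k}_\lambda,\hat{k}_\lambda\rangle|^2\le\|A\hat{k}_\lambda\|^2=\langle|A|^2\hat{k}_\lambda,\hat{k}_\lambda\rangle$.

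Combining the two estimates, $|\langle A\hat{k}_\lambda,\hat{k}_\lambda\rangle|^2$ is dominated by the scalar
\[ R_\lambda=\tfrac{\alpha}{2}\langle g^4(|A|)\hat{k}_\lambda,\hat{k}_\lambda\rangle+\tfrac{\alpha}{2}\langle h^4(|A^*|)\hat{k}_\lambda,\hat{k}_\lambda\rangle+(1-\alpha)\langle|A|^2\hat{k}_\lambda,\hat{k}_\lambda\rangle, \]
which is a \emph{convex combination} (weights $\tfrac\alpha2,\tfrac\alpha2,1-\alpha$ summing to $1$) of three nonnegative reals. Now monotonicity of $\phi$ gives $\phi\bigl(|\langle A\hat{k}_\lambda,\hat{k}_\lambda\rangle|^2\bigr)\le\phi(R_\lambda)$, convexity of $\phi$ distributes over the convex combination, $\phi(R_\lambda)\le\tfrac\alpha2\phi\bigl(\langle g^4(|A|)\hat{k}_\lambda,\hat{k}_\lambda\rangle\bigr)+\tfrac\alpha2\phi\bigl(\langle h^4(|A^*|)\hat{k}_\lambda,\hat{k}_\lambda\rangle\bigr)+(1-\alpha)\phi\bigl(\langle|A|^2\hat{k}_\lambda,\hat{k}_\lambda\rangle\bigr)$, and finally Jensen's inequality for the vector state $T\mapsto\langle T\hat{k}_\lambda,\hat{k}_\lambda\rangle$ (for convex $\phi$ and $T\ge 0$ one has $\phi(\langle T\hat{k}_\lambda,\hat{k}_\lambda\rangle)\le\langle\phi(T)\hat{k}_\lambda,\hat{k}_\lambda\rangle$, by the spectral theorem) pulls $\phi$ inside each state. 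This yields $\phi\bigl(|\langle A\hat{k}_\lambda,\hat{k}_\lambda\rangle|^2\bigr)\le\bigl\langle\bigl(\tfrac\alpha2(\phi(g^4(|A|))+\phi(h^4(|A^*|)))+(1-\alpha)\phi(|A|^2)\bigr)\hat{k}_\lambda,\hat{k}_\lambda\bigr\rangle$; since $\phi\ge 0$ the operator in brackets is positive, so the right side is at most its Berezin number. Taking the supremum over $\lambda\in\Omega$ and using that $\phi$ is continuous and increasing (hence $\sup_\lambda\phi(\cdot)=\phi(\sup_\lambda(\cdot))$, together with $\sup_\lambda|\langle A\hat{k}_\lambda,\hat{k}_\lambda\rangle|^2=\textbf{ber}^2(A)$) gives the first inequality.

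For the second inequality I would simply apply the first to $A^*$ in place of $A$, using $\textbf{ber}(A^*)=\textbf{ber}(A)$, $|(A^*)^*|=|A|$, and that $|A^*|$ is unchanged; this interchanges the roles of $|A|$ and $|A^*|$ on the right-hand side.

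The main point to get right is not any individual estimate but organizing the chain so that, after $\phi$ is applied, the operators $g^4(|A|)$, $h^4(|A^*|)$, $|A|^2$ sit \emph{individually} inside $\phi$ and inside separate vector states, rather than having $\phi$ applied to the single operator $\tfrac\alpha2(g^4(|A|)+h^4(|A^*|))+(1-\alpha)|A|^2$. This is exactly what forces the two-stage use of convexity (first splitting a convex combination of \emph{scalars}, then the vector-state Jensen inequality), and it is also why the degrees must be raised from $g^2,h^2$ to $g^4,h^4$ via $\langle T\hat{k}_\lambda,\hat{k}_\lambda\rangle^2\le\langle T^2\hat{k}_\lambda,\hat{k}_\lambda\rangle$ \emph{before} applying $\phi$; doing it afterwards would not produce the stated right-hand side.
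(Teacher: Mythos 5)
Your proposal is correct and follows essentially the same route as the paper's proof: the mixed Schwarz lemma with $B=I$, the AM--GM plus $\langle T\hat{k}_\lambda,\hat{k}_\lambda\rangle^2\leq\langle T^2\hat{k}_\lambda,\hat{k}_\lambda\rangle$ upgrade to $g^4,h^4$, the two-stage use of convexity, the vector-state Jensen inequality, and the passage to $A^*$ for the second bound. If anything, your choice of $\|A\hat{k}_\lambda\|^2=\langle|A|^2\hat{k}_\lambda,\hat{k}_\lambda\rangle$ on the $(1-\alpha)$-piece makes the conclusion line up with the first displayed inequality exactly as stated, whereas the paper's proof works with $\|A^*\hat{k}_\lambda\|^2$ and so lands on $\phi(|A^*|^2)$ there.
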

    
	\begin{proof}
	Let $\hat{k}_\lambda$ be a normalized reproducing kernel of $\mathcal{H}.$	Then by  the convexity of $\phi,$ we have 
		\begin{eqnarray*}
			\phi \left(\left|\left\langle A\hat{k}_\lambda, \hat{k}_\lambda \right\rangle\right|^2 \right) 
            &\leq& \alpha \phi \left(\left|\left\langle A\hat{k}_\lambda, \hat{k}_\lambda \right\rangle\right|^2\right) + (1-\alpha) \phi \left(\|A^*\hat{k}_\lambda\|^2\right)\\
			&\leq& \alpha \phi \left(\left\langle g^2(|A|)\hat{k}_\lambda,\hat{k}_\lambda \right\rangle \left\langle h^2(|A^*|)\hat{k}_\lambda, \hat{k}_\lambda \right\rangle  \right) + (1-\alpha) \left\langle |A^*|^2 \hat{k}_\lambda, \hat{k}_\lambda \right\rangle\\ 
&&\,\,\,\,\,\,\,\,\,\,\,\,\,\,\,\,\,\,\,\,\,\,\,\,\,\,\,\,\,\,\,\,\,\,\,\,\,\,\,\,\,\,\,\,\,\,\,\,\,\,\,\,\,\,\,\,\,\,\,\,\,\,\,\,\,\,\,\,\,\,~~(\mbox{using Lemma \ref{mixed schwarz}})\\
			&\leq& \alpha \phi \left(\left\langle\left(\frac{g^4(|A|) + h^4(|A^*|)}{2}\right)\hat{k}_\lambda, \hat{k}_\lambda \right\rangle \right) + (1-\alpha) \phi \left(\left\langle |A^*|^2\hat{k}_\lambda, \hat{k}_\lambda \right\rangle\right)\\
			&\leq& \frac{\alpha}{2} \left\langle \left(\phi\left(g^4(|A|)\right) + \phi\left(h^4(|A^*|)\right)\right)\hat{k}_\lambda, \hat{k}_\lambda  \right\rangle + (1-\alpha)\left \langle \phi\left(|A^*|^2\right) \hat{k}_\lambda, \hat{k}_\lambda \right\rangle\\ 
			&=& \left\langle \left(\frac{\alpha}{2} \left(\phi\left(g^4(|A|)\right) + \phi\left(h^4(|A^*|)\right)\right) + (1-\alpha) \phi \left(|A^*|^2\right) \right)\hat{k}_\lambda, \hat{k}_\lambda \right\rangle
			\\ &\leq&  \textbf{ber}\left(\frac{\alpha}{2}\left(\phi \left(g^4(|A|)\right) + \phi \left(h^4(|A^*|) \right)\right) + (1-\alpha) \phi \left(|A^*|^2\right)\right).
		\end{eqnarray*}
		Therefore, taking the supremum over all $\lambda \in \Omega$, we obtain the first inequality. The second inequality can be obtained by replacing $A$ with $A^*$.	
	\end{proof}

\smallskip

	 \textbf{Data availability statements.} No data was used for the research described in the article.
     
	\textbf{Declaration of competing interest.} There is no competing interest.
    

	\bibliographystyle{amsplain}

\begin{thebibliography}{99}
    
    \bibitem{Axioms} N. Altwaijry, K.  Feki, N. Minculete, Some New Estimates for the Berezin Number of Hilbert Space Operators, Axioms 11, 2022, 683.
    
    \bibitem{MJM} H. Basaran, V. Gurdal, On Berezin radius inequalities via Cauchy-Schwarz type inequalities, Malaya J. Mat. 11(02)(2023), 127-141.
    
    \bibitem{BMA} H. Basaran, M. B. Huban, M. Gurdal, Inequalities related to Berezin norm and Berezin number of operators, Bull. Math. Anal. Appl. 14 (2022), no. 2, 1–11.
    
	\bibitem{Pintu GMJ} P. Bhunia, Improved bounds for the numerical radius via a new norm on $\mathcal{B}(\mathcal{H})$, Georgian Math. J. (2025). https://doi.org/10.1515/gmj-2024-2084.
    
    \bibitem{CAOT Pintu}  P. Bhunia, K. Paul, A. Sen, Inequalities involving Berezin norm and Berezin number, Complex Anal. Oper. Theory 17 (2023), no. 1, Paper No. 7, 15 pp.
    
    \bibitem{CAOT} 	P. Bhunia, M. T.  Garayev, K. Paul, R. Tapdigoglu, Some new applications of Berezin symbols, Complex Anal. Oper. Theory 17 (2023), no. 6, Paper No. 96, 15 pp. 
    
    \bibitem{Buzano} M. L. Buzano, Generalizzatione della disuguaglianza di Cauchy-Schwarz, (Italian) Rend. Sem. Mat. Univ. e Politec. Torino 31 (1974), 405–409.
    
    \bibitem{DCDS} F. Gao, M. Hou,  Inequalities involving Berezin number and $\alpha-$ Berezin norm, Discrete and Continuous Dynamical Systems - Series S, 2024. Doi: 10.3934/dcdss.2024086.
    
    \bibitem{Halmos} P. R. Halmos, A Hilbert space problem book, Encyclopedia Math. Appl., 17, Grad. Texts in Math., 19, Springer-Verlag, New York-Berlin, 1982, ISBN: 0-387-90685-1.

\bibitem {K2013} M. T. Karaev , Reproducing kernels and Berezin symbols
	techniques in various questions of operator theory, Complex Anal. Oper. Theory, 7 (2013), 983--1018.
	
	
	\bibitem{KAR_JFA_2006} M. T. Karaev, Berezin symbol and invertibility of operators on the functional Hilbert spaces, J. Funct. Anal., 238 (2006),
	181--192.
	
		\bibitem{KS_CVTA_2005} M. T. Karaev and S. Saltan, Some results on Berezin symbols, Complex Var. Theory Appl., 50 (2005), 185--193.

    
    \bibitem{Res} F. Kittaneh, Notes on some inequalities for Hilbert space operators, Publ. Res. Inst. Math. Sci. 24 (1988), no. 2, 283-293.
    
    \bibitem{Nayak C-S} R. K. Nayak, Enhancement of the Cauchy-Schwarz inequality and its implications for numerical radius inequalities, https://doi.org/10.48550/arXiv.2405.19698.

    \bibitem{Paulsen_book}	V.I. Paulsen and M. Raghupathi, An Introduction to the Theory of Reproducing Kernel Hilbert Spaces, Cambridge Studies in Advanced Mathematics, vol.152, Cambridge University Press, Cambridge, 2016.

    	
 \bibitem{Taghavi} A. Taghavi, T. A. Roushan, and V. Darvish, Some Upper Bounds for the Berezin Number of Hilbert Space Operators, Filomat 33:14 (2019), 4353–4360.

    \bibitem {T_OM_2021} R. Tapdigoglu, New Berezin symbol inequalities for operators
	on the reproducing kernel Hilbert space, Oper. Matrices, 15(2021), no. 3,  1031--1043.

\end{thebibliography}
	
\end{document}